\date{}
\newlength{\defbaselineskip}
\newcommand{\setlinespacing}[1]%
           {\setlength{\baselineskip}{#1 \defbaselineskip}}
\newcommand{\N}{{\mathbb{N}}}
\newcommand{\actaqed}{\hfill $\actabox$}
{\medskip\noindent \textit{Proof of #1. }}%
{\actaqed \medskip}
\def\cA{{\mathcal A}}
\def\C{{\mathcal C}}
\def\cC{{\mathcal C}}
\def\cV{{\mathcal V}}
\def\cF{{\mathcal F}}
\def \Tr{\mathcal T}
\def \cK{\mathcal K}
\def \cR{\mathcal R}
\def \cX{\mathcal X}
\def\R{{\mathbb R}}
\def\Z{\mathbb Z}
\def \T{\mathbb T}
\def\bbC{\mathbb C}
\def \<{\langle}
\def\>{\rangle}
\def \Og{\Omega}
\def\bx{\mathbf x}
\def\by{\mathbf y}
\def\bk{\mathbf k}
\def\bw{\mathbf w}
\def\bs{\mathbf s}
\def\br{\mathbf r}
\def\bj{\mathbf j}
\def\ba{\mathbf a}
\def\bF{\mathbf F}
\newtheorem{Theorem}{Theorem}[section]
\newtheorem{Lemma}{Lemma}[section]
\newtheorem{Definition}{Definition}[section]
\newtheorem{Proposition}{Proposition}[section]
\newtheorem{Remark}{Remark}[section]
\numberwithin{equation}{section}
\newcommand{\be}{\begin{equation}}
\newcommand{\ee}{\end{equation}}
\def\Og{\Omega}
\def\cA{\mathcal{A}}
\def\cX{\mathcal{X}}
\def\bx{\mathbf{x}}
\def\by{\mathbf{y}}
\def\bh{\mathbf{h}}
\def\bw{\mathbf{w}}
\def\bx{\mathbf{x}}
\DeclareSymbolFont{fouriersymbols}{FMS}{futm}{m}{n}
\DeclareSymbolFont{fourierlargesymbols}{FMX}{futm}{m}{n}
\DeclareMathDelimiter{\VT}{\mathord}{fouriersymbols}{152}{fourierlargesymbols}{147}
\def\bh{\mathbf{h}}
\begin{document}

\title{On universal sampling recovery in the uniform norm}

\author{   V.N. Temlyakov 	\footnote{
		This research was supported by the Russian Science Foundation (project No. 23-71-30001)
at the Lomonosov Moscow State University.
  }}

\newcommand{\Addresses}{{
  \bigskip
  \footnotesize


  V.N. Temlyakov, \textsc{Steklov Mathematical Institute of Russian Academy of Sciences, Moscow, Russia; \\Lomonosov Moscow State University; \\ Moscow Center of Fundamental and Applied Mathematics; \\ University of South Carolina.
  \\
E-mail:} \texttt{temlyakovv@gmail.com}

}}
\maketitle

\begin{abstract}{It is known that results on universal sampling discretization of the square norm are useful in sparse sampling recovery with error measured in the square norm. In this paper we demonstrate how known results on universal sampling discretization of the uniform norm and recent results on universal sampling representation allow us to provide good universal methods of sampling recovery for anisotropic Sobolev and Nikol'skii classes of periodic functions of several variables. 
The sharpest results are obtained in the case of functions on two variables, where the Fibonacci point sets are used for recovery. 

	 }\end{abstract}

{\it Keywords and phrases}: Sampling discretization, universality, recovery.

{\it MSC classification 2000:} Primary 65J05; Secondary 42A05, 65D30, 41A63.

\section{Introduction}
\label{I}

The idea of universal approximation and universal cubature formulas is well known in approximation 
theory. This idea was explicitly formulated and developed in \cite{VT36} and \cite{VT42}. This concerns 
approximation of smooth multivariate functions. The
concept of smoothness becomes more complicated in the multivariate case than it
is in the univariate case. In the multivariate case a function may have
different smoothness properties in different coordinate directions. In other
words, functions may belong to different anisotropic smoothness classes (see anisotropic Sobolev and 
 Nikol'skii 
classes  $W^\br_{q,\alpha}$  and $H^\br_q$ in Section \ref{ac}). It is known (see Chapter 3 of \cite{VTbookMA}) that approximation characteristics of
anisotropic smoothness classes depend on the average smoothness $g(\br)$ and optimal
approximation methods depend on anisotropy of classes, on the vector $\br$. This motivated a study in
\cite{VT36} of existence of an approximation method that is good for all
anisotropic smoothness classes. This is a problem of existence of a universal
method of approximation.  We note that the universality concept in learning
theory is very important and it is close to the concepts of adaptation and
distribution-free estimation in non-parametric statistics (\cite{GKKW},
\cite{BCDDT}, \cite{VT113}). 

The problem of finding universal methods of approximation can be
raised in the following way. Assume that we know that the function $f$
belongs, for example, to the Nikol'skii class $H^\br_q$ of periodic functions
but the vector $\br$
is not known exactly and we only know  that $\br\in P:=\prod_{j=1}^d
[A_j,B_j]$. Which
is the most natural form of the partial sums of the Fourier series
for approximation of the function $f(\mathbf x)$? 
It is proved in \cite{VT36} (see also \cite{VTbookMA}, Section 5.4.1) 
that the answer to the above question gives the hyperbolic cross
polynomials. It is proved there that in the sense of widths (orthowidth  and Kolmogorov width)
in order to universally achieve optimal errors, say, in terms of the Kolmogorov width $d_m$ we 
need to use subspaces of dimension $m(\log m)^{d-1}$ ($d$ is the number of variables). 

In this paper we consider the problem of universal sampling recovery in the uniform norm of 
periodic functions from anisotropic Sobolev and  Nikol'skii classes. It turns out 
that there exists a universal sampling recovery algorithm (nonlinear), which uses the number of points of order $m$ and provides optimal rate of sampling recovery with $m$ points for each anisotropic class. This means that in this case the use of nonlinear method allows us to build a universal 
method without loosing an extra factor $(\log m)^{d-1}$ in the number of parameters.

\section{Universal discretization and sampling recovery}
\label{ud}

 We now give explicit formulations of the
sampling discretization problem (also known as the Marcinkiewicz discretization problem) and of the problem of universal discretization. Let $\Omega$ be a compact subset of $\R^d$ with the probability measure $\mu$. By the $L_q$ norm, $1\le q< \infty$, of a function defined on $\Omega$,  we understand
$$
\|f\|_q:=\|f\|_{L_q(\Omega,\mu)} := \left(\int_\Omega |f|^qd\mu\right)^{1/q}.
$$
By the $L_\infty$ norm we understand the uniform norm of continuous functions
$$
\|f\|_\infty := \max_{\bx\in\Omega} |f(\bx)|
$$
and with a little abuse of notations we sometimes write $L_\infty(\Omega)$ for the space $\C(\Omega)$ of continuous functions on $\Omega$. In this paper we focus on the case 
$\Omega = \T^d := [0,2\pi]^d$ and $\mu$ is the normalised Lebesgue measure on $\T^d$.

{\bf The sampling discretization problem.} Let $(\Omega,\mu)$ be a probability space and 
  $X_N\subset L_q$ be an $N$-dimensional subspace of $L_q(\Omega,\mu)$ with  $1\le q \le \infty$  (the index $N$ here, usually, stands for the dimension of $X_N$).  We shall always assume that every function in $X_N$ is defined everywhere on $\Og$, and 
  \[ f\in X_N, \  \|f\|_q =0\implies f=0\in X_N.\]  We say that $X_N$  admits the Marcinkiewicz-type discretization theorem with parameters $m\in \N$ and $q$ and positive constants $C_1\le C_2$ if there exists a set $\xi := \{\xi^j\}_{j=1}^m \subset \Omega$
 such that for any $f\in X_N$ we have in the case $1\le q <\infty$
\be\label{A1}
C_1\|f\|_q^q \le \frac{1}{m} \sum_{j=1}^m |f(\xi^j)|^q \le C_2\|f\|_q^q
\ee
and in the case $q=\infty$ 
$$
C_1\|f\|_\infty \le \max_{1\le j\le m} |f(\xi^j)| \le  \|f\|_\infty.
$$

{\bf The  problem of universal discretization.} Let $\cX:= \{X(n)\}_{n=1}^k$ be a collection of finite-dimensional  linear subspaces $X(n)$ of the $L_q(\Omega)$, $1\le q \le \infty$. We say that a set $\xi:= \{\xi^j\}_{j=1}^m \subset \Omega $ provides {\it universal discretization} for the collection $\cX$ if, in the case $1\le q<\infty$, there are two positive constants $C_i$, $i=1,2$, such that for each $n\in\{1,\dots,k\}$ and any $f\in X(n)$ we have
\[
C_1\|f\|_q^q \le \frac{1}{m} \sum_{j=1}^m |f(\xi^j)|^q \le C_2\|f\|_q^q.
\]
In the case $q=\infty$  for each $n\in\{1,\dots,k\}$ and any $f\in X(n)$ we have
\be\label{1.2u}
C_1\|f\|_\infty \le \max_{1\le j\le m} |f(\xi^j)| \le  \|f\|_\infty.
\ee 
Note that the  problem of universal discretization for the collection $\cX:= \{X(n)\}_{n=1}^k$ is 
the sampling discretization problem for the set $\cup_{n=1}^k X(n)$.

We refer the reader to the survey papers \cite{DPTT} and \cite{KKLT} for results on sampling 
discretization, to the paper \cite{KKT} for recent results on sampling discretization of the uniform norm, and to \cite{VT160}, \cite{DT}, \cite{DTM1}, \cite{DTM2} for results on universal sampling discretization. 

In this paper we focus on the case $q=\infty$. 
We begin our discussion with a conditional result from \cite{VT183}. We only present the case $q=\infty$ here. 
Let $X_N$ be an $N$-dimensional subspace of the space of continuous functions $\C(\Omega)$. For a fixed $m$ and a set of points  $\xi:=\{\xi^\nu\}_{\nu=1}^m\subset \Omega$ we associate with a function $f\in \C(\Omega)$ the vector
$$
S(f,\xi) := (f(\xi^1),\dots,f(\xi^m)) \in \bbC^m.
$$
Denote
$$
\|S(f,\xi)\|_q:= \left(\frac{1}{m}\sum_{\nu=1}^m |f(\xi^\nu)|^q\right)^{1/q},\quad 1\le q<\infty,
$$
and 
$$
\|S(f,\xi)\|_\infty := \max_{\nu}|f(\xi^\nu)|.
$$
Define the best approximation of $f\in L_q(\Omega,\mu)$, $1\le q\le \infty$ by elements of $X_N$ as follows
$$
d(f,X_N)_q := \inf_{u\in X_N} \|f-u\|_q.
$$
It is well known that there exists an element, which we denote $P_{X_N,q}(f)\in X_N$, such that
$$
\|f-P_{X_N,q}(f)\|_q = d(f,X_N)_q.
$$
The operator $P_{X_N,q}: L_q(\Omega,\mu) \to X_N$ is called the Chebyshev projection. 

  Theorem \ref{AT1} below was proved in \cite{VT183} under the following assumption.

{\bf A1. Discretization.}   Suppose that $\xi:=\{\xi^j\}_{j=1}^m\subset \Omega$ is such that for any 
$u\in X_N$  we have
$$
C_1\|u\|_\infty \le \|S(u,\xi)\|_{\infty}  
$$
with a positive constant $C_1$. 

Consider the following well known recovery operator (algorithm)  
$$
\ell q(\xi)(f) := \ell q(\xi,X_N)(f):=\text{arg}\min_{u\in X_N} \|S(f-u,\xi)\|_{q}.
$$
We only consider the case $q=\infty$ here and for brevity we drop $\infty$ from the notation:
$\ell (\xi,X_N) := \ell \infty(\xi,X_N)$.

\begin{Theorem}[\cite{VT183}]\label{AT1} Under assumption {\bf A1}  for any $f\in \C(\Omega)$ we have
$$
\|f-\ell (\xi,X_N)(f)\|_\infty \le (2C_1^{-1} +1)d(f, X_N)_\infty.
$$
\end{Theorem}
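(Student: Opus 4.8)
The plan is to bound the uniform error by splitting $f - \ell(\xi,X_N)(f)$ through the Chebyshev approximant $P := P_{X_N,\infty}(f)$ and exploiting that $P - \ell(\xi,X_N)(f)$ lies in $X_N$, where assumption \textbf{A1} converts a sup over the finite point set $\xi$ back into a genuine uniform norm. First I would write, using the triangle inequality,
$$
\|f-\ell(\xi,X_N)(f)\|_\infty \le \|f-P\|_\infty + \|P-\ell(\xi,X_N)(f)\|_\infty = d(f,X_N)_\infty + \|P-\ell(\xi,X_N)(f)\|_\infty .
$$
Since $P - \ell(\xi,X_N)(f) \in X_N$, assumption \textbf{A1} gives
$$
\|P-\ell(\xi,X_N)(f)\|_\infty \le C_1^{-1}\,\|S(P-\ell(\xi,X_N)(f),\xi)\|_\infty .
$$

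Next I would estimate this discrete norm, again by the triangle inequality for $\|S(\cdot,\xi)\|_\infty$, as
$$
\|S(P-\ell(\xi,X_N)(f),\xi)\|_\infty \le \|S(f-P,\xi)\|_\infty + \|S(f-\ell(\xi,X_N)(f),\xi)\|_\infty .
$$
For the first term, $\|S(f-P,\xi)\|_\infty \le \|f-P\|_\infty = d(f,X_N)_\infty$ simply because the discrete sup over $\xi$ is dominated by the full sup. For the second term I would invoke the defining (minimizing) property of the recovery operator: since $\ell(\xi,X_N)(f)$ minimizes $\|S(f-u,\xi)\|_\infty$ over $u\in X_N$ and $P\in X_N$ is a competitor, we get $\|S(f-\ell(\xi,X_N)(f),\xi)\|_\infty \le \|S(f-P,\xi)\|_\infty \le d(f,X_N)_\infty$. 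Hence $\|S(P-\ell(\xi,X_N)(f),\xi)\|_\infty \le 2d(f,X_N)_\infty$, and combining with the previous display yields $\|P-\ell(\xi,X_N)(f)\|_\infty \le 2C_1^{-1} d(f,X_N)_\infty$. Adding back $d(f,X_N)_\infty$ from the first split gives the claimed bound $(2C_1^{-1}+1)d(f,X_N)_\infty$.

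There is no serious obstacle here; the argument is the standard "near-isometry plus minimizing competitor" scheme. The one point that deserves a line of care is that $\ell(\xi,X_N)(f)$ is well defined — that the $\arg\min$ is attained — which follows because $X_N$ is finite-dimensional and $u\mapsto\|S(f-u,\xi)\|_\infty$ is a continuous, coercive function on $X_N$ (coercivity using \textbf{A1}: on $X_N$ the discrete sup-norm is bounded below by $C_1\|\cdot\|_\infty$, so large $u$ forces large discrete norm once $\|f-u\|_\infty$ is large). If one wants to avoid even this, one can phrase the conclusion for any $u^\ast\in X_N$ attaining (or nearly attaining) the minimum and pass to the infimum; the constant is unchanged. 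Everything else is the triangle inequality and the two trivial facts that the discrete norm is $\le$ the uniform norm and that $P$ is a legitimate competitor in the minimization.
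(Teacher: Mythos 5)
Your argument is correct and is exactly the standard Lebesgue-type inequality proof: split through the Chebyshev projection $P_{X_N,\infty}(f)$, apply \textbf{A1} to the $X_N$-element $P_{X_N,\infty}(f)-\ell(\xi,X_N)(f)$, and use the minimizing property of $\ell(\xi,X_N)(f)$ with $P_{X_N,\infty}(f)$ as competitor. The paper itself only cites this theorem from \cite{VT183} without reproducing the proof, and your argument coincides with the one given there; the remark on attainment of the $\arg\min$ via coercivity is a sensible extra touch.
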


We prove   the following two conditional theorems. We define a new algorithm, which is an $L_\infty$ version of the algorithm studied in \cite{DTM1} in the case of $L_2$:
$$
n(\xi,f) := \text{arg}\min_{ 1\le n\le k}\|f-\ell (\xi,X(n))(f)\|_\infty,
$$
\be\label{I4}
  \ell (\xi,\cX)(f):= \ell (\xi,X(n(\xi,f)))(f).
\ee

  \begin{Definition}\label{ID1}   We say that a set $\xi:= \{\xi^j\}_{j=1}^m \subset \Omega $ provides {\it   $L_\infty$-universal discretization}   for the collection $\cX:= \{X(n)\}_{n=1}^k$ of finite-dimensional  linear subspaces $X(n)$ if we have (for $D\ge 1$)
 \be\label{I3}
 \|f\|_\infty \le D  \max_{1\le j\le m} |f(\xi^j)|  \quad \text{for any}\quad f\in \bigcup_{n=1}^k X(n) .
\ee
We denote by $m(\cX,D)$ the minimal $m$ such that there exists a set $\xi$ of $m$ points, which
provides  $L_\infty$-universal discretization (\ref{I3}) for the collection $\cX$. 
\end{Definition}

 \begin{Theorem}\label{udT1} Let $m \in \N$ and $\cX$ be a collection of finite-dimensional subspaces.    Assume that  there exists a set $\xi:= \{\xi^j\}_{j=1}^m \subset \Omega $, which provides {\it   $L_\infty$-universal discretization}  (\ref{I3}) for the collection $\cX$. Then for   any  function $ f \in \C(\Omega)$ we have
 \be\label{I6}
  \|f-\ell (\xi,\cX)(f)\|_\infty \le  (2D +1) \min_{1\le n\le k} d(f,X(n))_\infty.
 \ee
 \end{Theorem}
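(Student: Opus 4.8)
The plan is to reduce Theorem~\ref{udT1} to the non-universal Theorem~\ref{AT1} applied to each subspace $X(n)$ individually, and then to control the loss incurred by the selection step $n(\xi,f)$. First I would observe that the $L_\infty$-universal discretization hypothesis (\ref{I3}) says precisely that each individual subspace $X(n)$ satisfies assumption {\bf A1} with the common constant $C_1 = D^{-1}$ and the common point set $\xi$. Hence Theorem~\ref{AT1} gives, for every $n\in\{1,\dots,k\}$ and every $f\in\C(\Omega)$,
\[
\|f-\ell(\xi,X(n))(f)\|_\infty \le (2D+1)\, d(f,X(n))_\infty .
\]

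Next I would use the definition of the algorithm $\ell(\xi,\cX)$. By construction, $n(\xi,f)$ is the index minimizing $\|f-\ell(\xi,X(n))(f)\|_\infty$ over $n$, and $\ell(\xi,\cX)(f) = \ell(\xi,X(n(\xi,f)))(f)$. Therefore
\[
\|f-\ell(\xi,\cX)(f)\|_\infty = \min_{1\le n\le k}\|f-\ell(\xi,X(n))(f)\|_\infty
\le \min_{1\le n\le k}(2D+1)\, d(f,X(n))_\infty
= (2D+1)\min_{1\le n\le k} d(f,X(n))_\infty,
\]
which is exactly (\ref{I6}). The point is that the nonlinear min-selection step is \emph{free} in the error bound: we never pay a factor for not knowing the right $n$, because we literally pick the best outcome.

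I do not expect a serious obstacle here; the argument is essentially a one-line combination once Theorem~\ref{AT1} is in hand. The only thing to be careful about is the logical status of the quantities: one should check that $\ell(\xi,X(n))(f)$ is well defined (the $\arg\min$ over the finite-dimensional $X(n)$ is attained, as in the discussion preceding Theorem~\ref{AT1}, using that $\|S(\cdot,\xi)\|_\infty$ restricted to $X(n)$ is a norm by {\bf A1}), so that $n(\xi,f)$ and hence $\ell(\xi,\cX)(f)$ make sense; and that the constant $C_1=D^{-1}$ plugged into Theorem~\ref{AT1} indeed yields $2C_1^{-1}+1 = 2D+1$. Beyond these routine verifications, the proof is immediate.
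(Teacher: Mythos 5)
Your proposal is correct and follows exactly the paper's own argument: both deduce assumption {\bf A1} with $C_1=D^{-1}$ for each $X(n)$ from the universal discretization hypothesis, apply Theorem \ref{AT1} subspace by subspace with the common point set $\xi$, and conclude via the min-selection in the definition (\ref{I4}). The extra remarks on well-definedness of the $\arg\min$ are a sensible (if routine) addition that the paper leaves implicit.
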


\begin{proof}  
Suppose that   a set $\xi:= \{\xi^j\}_{j=1}^m \subset \Omega $ provides   $L_\infty$-universal discretization (\ref{I3}) for the collection $\cX$. Then condition {\bf A1} is satisfied for all $X(n)$ from the collection $\cX$ with $C_1=D^{-1}$.  Thus, we can apply Theorem \ref{AT1} for each 
subspace $X(n)$ with the same set of points $\xi$. It gives for all $n=1,\dots,k$ 
\be\label{A3}
\|f-\ell (\xi,X(n))(f)\|_\infty \le  (2D +1)d(f, X(n))_\infty.
\ee
 Then, inequality (\ref{A3}) and the definition (\ref{I4}) imply
\be\label{A4}
 \|f-\ell (\xi,\cX)(f)\|_\infty \le   (2D +1)\min_{1\le n\le k}d(f, X(n))_\infty.
 \ee
 This proves inequality (\ref{I6}) of Theorem \ref{udT1}.  
 \end{proof}
 
 We now formulate a direct corollary of Theorem \ref{udT1} for function classes. Denote by
 $\cA(m,k,D)$   the family of all collections $\cX:= \{X(n)\}_{n=1}^k$ of finite-dimensional  linear subspaces $X(n)$  of the $\cC(\Omega)$ such that for each $\cX$  there exists a set $\xi:= \{\xi^j\}_{j=1}^m \subset \Omega $, which provides {\it   $L_\infty$-universal discretization} (\ref{I3}) for the $\cX$. Define the following recovery characteristic (in the case $p=\infty$), which was introduced in \cite{DTM1} in the case $p=2$.
$$
\varrho^{\infty}_{m}(\bF,\cX,L_\infty) := \inf_{\{\xi^1,\dots,\xi^m\} \subset\Og} \sup_{f\in \bF} \min_{L\in\cX} \|f-\ell(\xi,L)(f)\|_\infty.
$$

For a compact subset $\bF$ and a subspace $Y$ of $\C(\Omega)$ define
$$
d(\bF,Y)_\infty := \sup_{f\in \bF} \inf_{y\in Y} \|f-y\|_\infty.
$$
 
  \begin{Theorem}\label{udT2} Let $m \in \N$ and let $\cX$ be a collection of finite-dimensional subspaces. Assume that    $\cX  \in \cA(m,k,D)$. Then for   any compact subset $\bF$ of $\C(\Omega)$,  we have  
 \be\label{I8}
 \varrho_{m}^{\infty}(\bF,\cX,L_\infty(\Omega)) \le  (2D +1)\min_{1\le n\le k}d(\bF,X(n))_\infty.
 \ee
 \end{Theorem}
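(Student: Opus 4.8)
The plan is to derive Theorem \ref{udT2} directly from Theorem \ref{udT1} by fixing an optimal (for universal discretization) point set and then passing from a pointwise-in-$f$ estimate to an estimate over the whole class $\bF$. First I would use the hypothesis $\cX\in\cA(m,k,D)$ to produce a single set $\xi:=\{\xi^j\}_{j=1}^m\subset\Og$ that provides $L_\infty$-universal discretization (\ref{I3}) for $\cX$. Plugging this particular $\xi$ into the definition of $\varrho_{m}^{\infty}(\bF,\cX,L_\infty(\Og))$ as an admissible choice gives
\be
\varrho_{m}^{\infty}(\bF,\cX,L_\infty(\Og)) \le \sup_{f\in\bF}\ \min_{L\in\cX}\ \|f-\ell(\xi,L)(f)\|_\infty .
\ee

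Next I would observe that the inner minimum over $L\in\cX$ equals $\|f-\ell(\xi,\cX)(f)\|_\infty$: by the definitions of $n(\xi,f)$ and of $\ell(\xi,\cX)(f)$ in (\ref{I4}), the aggregated algorithm selects exactly the subspace $X(n)$ minimizing $\|f-\ell(\xi,X(n))(f)\|_\infty$, so $\min_{L\in\cX}\|f-\ell(\xi,L)(f)\|_\infty = \min_{1\le n\le k}\|f-\ell(\xi,X(n))(f)\|_\infty = \|f-\ell(\xi,\cX)(f)\|_\infty$. Then Theorem \ref{udT1} applied with this $\xi$ yields, for every $f\in\C(\Og)$ and in particular every $f\in\bF$,
\be
\|f-\ell(\xi,\cX)(f)\|_\infty \le (2D+1)\min_{1\le n\le k} d(f,X(n))_\infty .
\ee
Taking the supremum over $f\in\bF$ gives $\varrho_{m}^{\infty}(\bF,\cX,L_\infty(\Og)) \le (2D+1)\sup_{f\in\bF}\min_{1\le n\le k} d(f,X(n))_\infty$.

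The final step is the routine one-sided interchange of supremum and minimum: for any fixed index $n_0$ one has $\min_{1\le n\le k} d(f,X(n))_\infty \le d(f,X(n_0))_\infty$ for all $f$, hence $\sup_{f\in\bF}\min_{1\le n\le k} d(f,X(n))_\infty \le \sup_{f\in\bF} d(f,X(n_0))_\infty = d(\bF,X(n_0))_\infty$; minimizing over $n_0\in\{1,\dots,k\}$ delivers the bound $\sup_{f\in\bF}\min_{n} d(f,X(n))_\infty \le \min_{1\le n\le k} d(\bF,X(n))_\infty$, which combined with the previous inequality proves (\ref{I8}).

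I do not anticipate a genuine obstacle here, since the statement is a corollary; the only point requiring a moment of care is the identification $\min_{L\in\cX}\|f-\ell(\xi,L)(f)\|_\infty=\|f-\ell(\xi,\cX)(f)\|_\infty$, which must be read off from the definition (\ref{I4}) of the aggregated algorithm rather than assumed, and the fact that the inequality $\sup\min\le\min\sup$ goes only in the direction we need (the reverse is false in general, but is not required).
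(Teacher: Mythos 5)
Your proof is correct and follows exactly the route the paper intends: the paper states Theorem \ref{udT2} as a ``direct corollary'' of Theorem \ref{udT1} without writing out the argument, and the steps you supply (fixing a universal discretization set $\xi$ from the hypothesis $\cX\in\cA(m,k,D)$, identifying $\min_{L\in\cX}\|f-\ell(\xi,L)(f)\|_\infty$ with $\|f-\ell(\xi,\cX)(f)\|_\infty$ via (\ref{I4}), and the one-sided bound $\sup_f\min_n \le \min_n\sup_f$, which the paper itself records just before Proposition \ref{acP2}) are precisely the implicit ones. No gaps.
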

 
 We refer the reader to the recent papers \cite{JUV}, \cite{DTM1}, and \cite{DTM2} for results 
in the style of Theorems \ref{udT1} and \ref{udT2}.

 \section{Universal sampling recovery for anisotropic classes}
 \label{ac} 
 
 We begin with known results on discretization from \cite{VT160}.
 We studied the  universal discretization for subspaces of the 
trigonometric polynomials in \cite{VT160}. Let $Q$ be a finite subset of $\Z^d$. We denote
$$
\Tr(Q):= \{f: \T^d\to \mathbb C: f(\bx)=\sum_{\bk\in Q}c_\bk e^{i(\bk,\bx)}\},
$$
where $\T^d := [0,2\pi]^d$. 

In \cite{VT160} we were primarily interested in the universal discretization  for the collection of subspaces of trigonometric polynomials with frequencies from parallelepipeds (rectangles). For $\bs=(s_1,\dots,s_d)\in\Z^d_+$
define
$$
R(\bs) := \{\bk =(k_1,\dots,k_d)\in \Z^d :   |k_j| < 2^{s_j}, \quad j=1,\dots,d\}.
$$
   Consider the collection $H(n,d):= \{\Tr(R(\bs)): \|\bs\|_1=n\}$. 

We proved in \cite{VT160} the following result.
\begin{Theorem}[\cite{VT160}]\label{acT1}   For every $1\le q\le\infty$ there exists a positive constant $C(d,q)$, which depends only on $d$ and $q$, such that for any $n\in \N$ there is a set $\xi(m):=\{\xi^\nu\}_{\nu=1}^m\subset \T^d$, with $m\le C(d,q)2^n$ that provides universal discretization in $L_q$   for the collection $H(n,d)$.
\end{Theorem}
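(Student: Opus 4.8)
The plan is to reduce the statement to the \emph{simultaneous} discretization of the individual boxes $\Tr(R(\bs))$, $\|\bs\|_1=n$, by one point set, and then to produce such a set of cardinality $\le C(d,q)2^n$ from point sets of small dispersion. Recall (Section~\ref{ud}) that providing universal discretization for $H(n,d)=\{\Tr(R(\bs)):\|\bs\|_1=n\}$ is the same as sampling discretization of the union $\bigcup_{\|\bs\|_1=n}\Tr(R(\bs))$, and since any function in this union already lies in a single $\Tr(R(\bs))$, it suffices to exhibit $\xi=\{\xi^\nu\}_{\nu=1}^m$ with $m\le C(d,q)2^n$ for which the relevant Marcinkiewicz-type inequality holds on \emph{each} $\Tr(R(\bs))$, $\|\bs\|_1=n$, with constants depending only on $d$ and $q$.

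Consider first $q=\infty$, the case of interest in this paper. The inequality $\max_\nu|f(\xi^\nu)|\le\|f\|_\infty$ is automatic, so only $\|f\|_\infty\le D\max_\nu|f(\xi^\nu)|$ needs to be arranged. I would take $\xi$ to be a set of \emph{small dispersion}: for $c(d):=(2\pi d)^{-d}$, choose $\xi\subset\T^d$ so that every axis-parallel box in $\T^d$ of normalized volume at least $c(d)2^{-n}$ contains a point of $\xi$. By the known bounds on the inverse of the minimal dispersion in a fixed dimension --- which give the bound $C(d)\varepsilon^{-1}$ for boxes of volume $\varepsilon$ --- such a $\xi$ exists with $m\le C(d)2^n$. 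Now fix $\bs$ with $\|\bs\|_1=n$ and $f\in\Tr(R(\bs))$ with a maximum point $\bx^*$ of $|f|$. The box $B:=\prod_{j=1}^d[x_j^*-\frac{1}{2d}2^{-s_j},\,x_j^*+\frac{1}{2d}2^{-s_j}]$ has normalized volume $(2\pi d)^{-d}2^{-n}=c(d)2^{-n}$, hence contains some $\xi^\nu$; using Bernstein's inequality $\|\partial_jf\|_\infty\le 2^{s_j}\|f\|_\infty$ on $\Tr(R(\bs))$ we get
\[
\|f\|_\infty-|f(\xi^\nu)|\le|f(\bx^*)-f(\xi^\nu)|\le\sum_{j=1}^d 2^{s_j}\|f\|_\infty\cdot\frac{1}{2d}2^{-s_j}=\frac12\|f\|_\infty,
\]
so $\|f\|_\infty\le 2\max_\nu|f(\xi^\nu)|$, which is (\ref{I3}) with $D=2$. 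In two dimensions one may take $\xi$ to be an explicit Fibonacci point set $\{(2\pi\mu/b_k,2\pi\mu b_{k-1}/b_k)\}_{\mu=1}^{b_k}$ with $b_k$ the Fibonacci number of order $\asymp 2^n$, whose dispersion is well known to be of order $1/b_k$; this is the origin of the sharp two-variable results mentioned in the abstract, and, combined with Theorems~\ref{udT1} and~\ref{udT2}, it already yields universal recovery for the anisotropic classes.

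For $1\le q<\infty$ one needs the two-sided inequality $C_1\|f\|_q^q\le\frac1m\sum_\nu|f(\xi^\nu)|^q\le C_2\|f\|_q^q$ on each $\Tr(R(\bs))$, and now neither side is free; a mere net property is insufficient since on $\Tr(R(\bs))$ the ratio $\|f\|_\infty/\|f\|_q$ can be as large as $2^{n(1-1/q)}$. Here I would ask $\xi$ to be, at every anisotropic dyadic resolution $\{2^{-s_j}\}$ with $\|\bs\|_1=n$, simultaneously a net \emph{and} well separated, so that each $\xi^\nu$ admits a surrounding box of volume $\asymp 2^{-n}$ with side lengths $\asymp 2^{-s_j}$ and bounded overlap; the two-sided inequality then follows from a local Nikol'skii inequality on $\Tr(R(\bs))$ and a covering argument. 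Alternatively, for $q=2\ell$ an even integer one linearizes: $|f|^q$ is a trigonometric polynomial with frequencies in a box of level $n+O_q(1)$, so it is enough to have an equal-weight quadrature formula with $\asymp 2^n$ nodes that is \emph{universal} (exact for all boxes of that level), which a number-theoretic choice of nodes provides, and general $q$ follows by interpolation between even integers. In every case the node count stays $\le C(d,q)2^n$.

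The main obstacle, and the entire content of the theorem, is the cardinality bound $m\le C(d,q)2^n$ \emph{without a logarithmic factor}. The collection $H(n,d)$ has $\asymp n^{d-1}$ members, each of dimension $\asymp 2^n$, and their union (the step hyperbolic cross) has $\asymp 2^n n^{d-1}$ frequencies; discretizing that union, or applying a union bound over the $\asymp n^{d-1}$ subspaces to a random point set, loses logarithmic factors in $m$. Circumventing this is precisely where the geometry enters: for $q=\infty$ it rests on the nontrivial fact that a single point set of \emph{linear} size can be an anisotropic net for \emph{all} anisotropic dyadic resolutions with product $2^{-n}$ at once --- i.e.\ on the sharp fixed-dimensional bound for the inverse minimal dispersion --- and for $q<\infty$ it additionally requires taming the upper Marcinkiewicz inequality with such a sparse set, which is the delicate point.
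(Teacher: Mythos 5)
The paper does not prove Theorem~\ref{acT1}; it imports it from \cite{VT160} and only indicates that the construction is based on the $(t,r,d)$-nets of Definition~\ref{acD1} (Niederreiter--Xing), with the cases $q=\infty$ and $1\le q<\infty$ treated separately. Measured against that, your $q=\infty$ argument is essentially the paper's: the fixed-dimension dispersion bound $C(d)\varepsilon^{-1}$ that you invoke is itself obtained from exactly these $(t,r,d)$-nets (every dyadic box of volume $2^{t-r}$ contains a point, with $2^{t-r}\asymp 2^{-n}$ and $2^r\asymp 2^n$ points in total), and your Bernstein/mean-value step on the anisotropic box around the maximum point is the standard way to convert the net property into the one-sided inequality (\ref{I3}) with $D=O(1)$. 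That half is complete and correct, and it is the only half the present paper actually uses.

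The case $1\le q<\infty$ is where your proposal has a genuine gap: you offer two strategies but carry out neither, and one of them fails as stated. The linearization route for even $q=2\ell$ would require an equal-weight cubature formula with $\asymp 2^n$ nodes exact on \emph{all} $\Tr(R(\bs+O_q(1)\mathbf 1))$, $\|\bs\|_1=n$, simultaneously; for $d\ge 3$ the known number-theoretic constructions (Lemma~\ref{CL1}) cost $m\asymp N(\log N)^{d-1}$, reintroducing precisely the logarithmic factor you correctly identify as the whole point of the theorem. Moreover ``general $q$ follows by interpolation between even integers'' is not a legitimate step: two-sided Marcinkiewicz-type inequalities (\ref{A1}), and in particular their lower bounds, do not interpolate in $q$. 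Your other route (a set that is simultaneously a net and well separated with bounded overlap at every resolution $\bs$ with $\|\bs\|_1=n$) is the right idea and is what the $(t,r,d)$-net delivers --- the crucial unproven ingredient is that a \emph{single} set of cardinality $C(d)2^n$ can contain \emph{exactly} $2^t$ points in every dyadic box of volume $2^{t-r}$, for all anisotropic resolutions at once, which gives the matching upper and lower local point counts needed for both sides of (\ref{A1}); this existence is the Niederreiter--Xing theorem and cannot be waved through. You would then still need the local comparison $\sum_{\Delta}\sup_{x\in\Delta}|f(x)|^q\,|\Delta|\le C\|f\|_q^q$ for $f\in\Tr(R(\bs))$ over the dyadic boxes $\Delta$ at resolution $\bs$ (via the de la Vall\'ee Poussin representation), which your sketch only names. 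As a minor point, the ratio $\|f\|_\infty/\|f\|_q$ on $\Tr(R(\bs))$ is bounded by $C2^{n/q}$, not $2^{n(1-1/q)}$; this does not affect your (correct) conclusion that the net property alone is insufficient for $q<\infty$.
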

 
Theorem \ref{acT1} basically solves the universal discretization problem for the collection $H(n,d)$. It provides the upper bound  $m\le C(d,q)2^n$ with $2^n$ being of the order of the dimension of each $\Tr(R(\bs))$ from the collection $H(n,d)$.
 Obviously, the lower bound for the cardinality of a set, providing the Marcinkiewicz discretization theorem for $\Tr(R(\bs))$ with $\|\bs\|_1=n$, is $\ge C(d)2^n$. 
In \cite{VT160} we treated separately the case $q=\infty$   and the case $1\le q<\infty$. Our construction of the universal set was based on deep results on 
 existence of special nets, known as $(t,r,d)$-nets. We present the definition of these important nets. 
 
 \begin{Definition}\label{acD1} A $(t,r,d)$-net (in base $2$) is a set $T$ of $2^r$ points in 
$[0,1)^d$ such that each dyadic box $[(a_1-1)2^{-s_1},a_12^{-s_1})\times\cdots\times[(a_d-1)2^{-s_d},a_d2^{-s_d})$, $1\le a_j\le 2^{s_j}$, $j=1,\dots,d$, of volume $2^{t-r}$ contains exactly $2^t$ points of $T$.
\end{Definition}
 A construction of such nets for all $d$ and $t\ge Cd$, where $C$ is a positive absolute constant, $r\ge t$  is given in \cite{NX}. 
 
 Theorems \ref{acT1} and \ref{udT1} imply the following statement for the collection $H(n,d)$.
 
  \begin{Proposition}\label{acP1} Let $n,d \in \N$.  There exists a set $\xi:= \{\xi^j\}_{j=1}^m \subset \T^d $, with $m \le C(d)2^n$, which provides    $L_\infty$-universal discretization (\ref{I3}) with $D=D(d)$ for the collection $H(n,d)$ and for   any  function $ f \in \C(\Omega)$ we have
 \be\label{ac1}
  \|f-\ell (\xi,H(n,d))(f)\|_\infty \le  (2D +1) \min_{\bs: \|\bs\|_1=n} d(f,\Tr(R(\bs)))_\infty.
 \ee
 \end{Proposition}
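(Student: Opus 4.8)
The plan is to chain together Theorem \ref{acT1} (in the case $q=\infty$) and Theorem \ref{udT1}; this statement is essentially a bookkeeping corollary. First I would invoke Theorem \ref{acT1} with $q=\infty$: it produces, for the given $n$, a set $\xi=\{\xi^j\}_{j=1}^m\subset\T^d$ with $m\le C(d,\infty)2^n=:C(d)2^n$ that provides universal discretization in $L_\infty$ for the collection $H(n,d)$. By the definition of universal discretization in the case $q=\infty$, see (\ref{1.2u}), this means precisely that there is a positive constant $C_1=C_1(d)$ such that for every $\bs$ with $\|\bs\|_1=n$ and every $f\in\Tr(R(\bs))$ one has $C_1\|f\|_\infty\le \max_{1\le j\le m}|f(\xi^j)|\le\|f\|_\infty$.

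Second, I would rewrite the left-hand inequality as $\|f\|_\infty\le C_1^{-1}\max_{1\le j\le m}|f(\xi^j)|$ for all $f\in\bigcup_{\bs:\|\bs\|_1=n}\Tr(R(\bs))$. Setting $D:=D(d):=C_1^{-1}$, this is exactly the $L_\infty$-universal discretization property (\ref{I3}) for the collection $H(n,d)$, with the same set $\xi$ of $m\le C(d)2^n$ points; moreover $D\ge 1$ because the right-hand inequality $\max_{1\le j\le m}|f(\xi^j)|\le\|f\|_\infty$ forces $C_1\le 1$. Note also that $H(n,d)=\{\Tr(R(\bs)):\|\bs\|_1=n\}$ is a finite collection of finite-dimensional subspaces of $\C(\T^d)$, since there are finitely many $\bs\in\Z_+^d$ with $\|\bs\|_1=n$, so the hypotheses of Theorem \ref{udT1} are met.

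Finally, I would apply Theorem \ref{udT1} to the collection $\cX=H(n,d)$ with this $\xi$ and this $D$: for any $f\in\C(\T^d)$ it yields $\|f-\ell(\xi,H(n,d))(f)\|_\infty\le(2D+1)\min_{\bs:\|\bs\|_1=n}d(f,\Tr(R(\bs)))_\infty$, which is exactly (\ref{ac1}). This finishes the argument, with the constants $C(d)$ and $D(d)$ inherited directly from Theorem \ref{acT1}.

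As for the main obstacle: there is none of substance here, since all the work is carried by Theorems \ref{acT1} and \ref{udT1}. The only point requiring a moment of care is the translation between the two-sided normalization used in the statement of universal discretization in Theorem \ref{acT1} and the one-sided form (\ref{I3}) appearing in Definition \ref{ID1} and Theorem \ref{udT1}; one simply reads off $D=C_1^{-1}$ and verifies $D\ge 1$. Everything else is a direct substitution.
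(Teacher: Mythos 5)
Your proposal is correct and follows exactly the route the paper intends: the paper states Proposition \ref{acP1} as a direct consequence of Theorems \ref{acT1} and \ref{udT1}, and your chaining of the two (including the translation of the two-sided discretization inequality into the one-sided form (\ref{I3}) with $D=C_1^{-1}\ge 1$) is precisely that argument, spelled out.
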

 
 We now apply Proposition \ref{acP1} to Sobolev and Nikol'skii anisotropic classes. We need some 
 standard definitions for that. Denote for $r>0$ and $\alpha \in \R$
 $$
 F_r(x,\alpha):= 1 + 2 \sum_{k=1}^\infty k^{-r} \cos(kx-\alpha \pi/2)
 $$
 the Bernoulli kernels.
 
 The Sobolev class $W_{q,\alpha}^{\br}    B$,
$\br = (r_1 ,\dots,r_d)$, $r_j  > 0$, $j=1,\dots,d$,
$1\le q \le \infty$ and $\alpha \in \R$
consists of functions  $f(\bx)$,
which have the following integral representation for each $1\le j\le d$
$$
f(\bx)= (2\pi)^{-1}\int_0^{2\pi}\varphi_j (x_1 ,\dots,x_{j-1},
y,x_{j+1},\dots,x_d) F_{r_j}  (x_j  - y,\alpha_j)dy ,
$$
\be\label{b3.1}
\|\varphi_j\|_{q}\le B .
\ee
 
The Nikol'skii  class  $H_{q}^{\br} B$,
$\br  =  (r_1 ,\dots,r_d)$, $r_j  > 0$, $j=1,\dots,d$, and  $1\le q  \le 
\infty$ is the set of functions $f\in L_{q}$
such that for each
$l_j  := [r_j ] + 1$, $j = 1,\dots,d$ the following relations hold
$$
\|f\|_{q}\le B ,\qquad\|\Delta_h^{l_j,j}f\|_{q}\le B|h|^{r_j},
\qquad j = 1,\dots,d ,
$$
where $\Delta_h^{l,j}$    is the $l$-th difference with step $h$ in the
variable $x_j$.  In
the case $B = 1$ we shall not  write  it  in  the  notations  of  the
Sobolev and Nikol'skii classes.
It is usual to call these classes isotropic in the case $\br =
 r \mathbf 1$, and anisotropic in the general case.
 
 It is convenient to use the following notation
 $$
 g(\br) := \left(\sum_{j=1}^d \frac{1}{r_j}\right)^{-1}.
 $$
 It is known (see, for instance \cite{VTbookMA}, p.108, Theorem 3.4.7) that for each $\br$ and $1\le q\le \infty$ the following bounds hold 
 \be\label{ac2}
\min_{\bs: \|\bs\|_1=n} \sup_{f\in W^\br_{q,\alpha} }d(f,\Tr(R(\bs)))_q \le C(\br,q,d) 2^{-g(\br)n}.
 \ee
  \be\label{ac3}
\min_{\bs: \|\bs\|_1=n} \sup_{f\in H^\br_{q} }d(f,\Tr(R(\bs)))_q \le C(\br,q,d) 2^{-g(\br)n}.
 \ee
  
Note that for any collection $\cX$ and any function class $\bF$ we have
$$
\sup_{f\in \bF} \min_{L\in\cX} \|f-\ell(\xi,L)(f)\|_\infty \le  \min_{L\in\cX}\sup_{f\in \bF}  \|f-\ell(\xi,L)(f)\|_\infty.
$$
Therefore, Proposition \ref{acP1} and bounds (\ref{ac2}) and (\ref{ac3}) imply the following statement.

\begin{Proposition}\label{acP2} Let $n,d \in \N$.  There exists a set $\xi:= \{\xi^j\}_{j=1}^m \subset \T^d $, with $m \le C(d)2^n$, which provides    $L_\infty$-universal discretization (\ref{I3}) with $D=D(d)$ for the collection $H(n,d)$ and for   any  function $ f \in \C(\Omega)$, which belongs to either $W^\br_{\infty,\alpha}$ or $H^\br_\infty$, we have
 \be\label{ac1}
  \|f-\ell (\xi,H(n,d))(f)\|_\infty \le  C(\br,d)2^{-g(\br)n}.
 \ee
 \end{Proposition}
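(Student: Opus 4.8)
The plan is to combine Proposition~\ref{acP1} with the approximation bounds (\ref{ac2}) and (\ref{ac3}) in the case $q=\infty$, so this is essentially an unwinding of definitions rather than a new argument. First I would fix $n,d\in\N$ and take the set $\xi=\{\xi^j\}_{j=1}^m\subset\T^d$ with $m\le C(d)2^n$ supplied by Proposition~\ref{acP1}: this $\xi$ provides $L_\infty$-universal discretization (\ref{I3}) with $D=D(d)$ for the collection $H(n,d)$, and for every $f\in\C(\Omega)$ it satisfies
\be\label{acpf1}
\|f-\ell(\xi,H(n,d))(f)\|_\infty \le (2D+1)\min_{\bs:\|\bs\|_1=n} d(f,\Tr(R(\bs)))_\infty.
\ee
The point is that the same $\xi$ works simultaneously for all $\bs$ with $\|\bs\|_1=n$, which is exactly the universality built into Proposition~\ref{acP1}; nothing further about the point set is needed.

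Next I would take an arbitrary $f$ belonging to either $W^{\br}_{\infty,\alpha}$ or $H^{\br}_{\infty}$ and bound the right-hand side of (\ref{acpf1}). Since $f$ lies in the corresponding class, the pointwise-in-$f$ quantity $\min_{\bs:\|\bs\|_1=n} d(f,\Tr(R(\bs)))_\infty$ is dominated by the class quantity $\min_{\bs:\|\bs\|_1=n}\sup_{g\in\bF} d(g,\Tr(R(\bs)))_\infty$, where $\bF$ is $W^{\br}_{\infty,\alpha}$ or $H^{\br}_{\infty}$ respectively. Applying (\ref{ac2}) with $q=\infty$ in the Sobolev case, or (\ref{ac3}) with $q=\infty$ in the Nikol'skii case, gives
\be\label{acpf2}
\min_{\bs:\|\bs\|_1=n} d(f,\Tr(R(\bs)))_\infty \le C(\br,d)\,2^{-g(\br)n}.
\ee
Here I should note the one genuinely substantive point, which is that (\ref{ac2}) and (\ref{ac3}) are quoted from \cite{VTbookMA} for all $1\le q\le\infty$, so the endpoint $q=\infty$ is covered without any extra work; this is the only place where real approximation-theoretic input enters, and it is already available to us.

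Finally I would substitute (\ref{acpf2}) into (\ref{acpf1}) and absorb the factor $(2D+1)=(2D(d)+1)$ into the constant, renaming $(2D(d)+1)\,C(\br,d)$ as $C(\br,d)$, to obtain
\be
\|f-\ell(\xi,H(n,d))(f)\|_\infty \le C(\br,d)\,2^{-g(\br)n},
\ee
which is (\ref{ac1}). The main (indeed only) thing to be careful about is that the constant in the final bound is allowed to depend on $\br$ and $d$ but not on $n$ or on $f$, and that the same set $\xi$ serves every admissible $\br$ with the given value of $g(\br)$ entering only through the exponent; both are immediate from the structure of Proposition~\ref{acP1} and (\ref{ac2})--(\ref{ac3}). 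There is no real obstacle here — the work was done in establishing Proposition~\ref{acP1} and in the cited width estimates; the present statement is their direct corollary.
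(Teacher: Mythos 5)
Your proposal is correct and is exactly the paper's own derivation: the paper proves Proposition~\ref{acP2} by noting that $\sup_{f\in\bF}\min_{L\in\cX}\|f-\ell(\xi,L)(f)\|_\infty \le \min_{L\in\cX}\sup_{f\in\bF}\|f-\ell(\xi,L)(f)\|_\infty$ and then combining Proposition~\ref{acP1} with the bounds (\ref{ac2}) and (\ref{ac3}) at $q=\infty$, which is precisely your argument. Nothing is missing.
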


\section{Some other recovery operators}
\label{F}

In Sections \ref{ud} and \ref{ac} we discussed the recovery algorithm $\ell(\xi,\cX)$ and its special 
realization $\ell(\xi,H(n,d))$. Theorem \ref{udT1} provides the Lebesgue-type inequality for approximation by the algorithm $\ell(\xi,\cX)$. We proved that inequality in the case of the uniform norm. We do not have a similar inequality in the case of approximation in the $L_p$ norm with $p<\infty$. The algorithm $\ell(\xi,\cX)$ has two nonlinear steps of its realization. First, we apply 
the recovery algorithm $\ell(\xi,X(n))$, $X(n)\in\cX$. Second, we minimize the error of approximation 
over all subspaces $X(n)$. In this section we focus on the first step and discuss other algorithms, which are simpler than $\ell(\xi,X(n))$ but still provide good approximation. Here we only consider recovery 
of periodic functions from Sobolev and Nikol'skii classes discussed in Section \ref{ac}. We mostly concentrate on the case $d=2$, where the strongest results are obtained. Our arguments are based on 
the very recent paper \cite{VT190}. 

 {\bf Case $d=2$. Fibonacci points.} We need some classical trigonometric polynomials for our further argument (see \cite{Z} and \cite{VTbookMA}). We begin with the univariate case. 
 The Dirichlet kernel of order $j$:
$$
\mathcal D_j (x):= \sum_{|k|\le j}e^{ikx} = e^{-ijx} (e^{i(2j+1)x} - 1)
(e^{ix} - 1)^{-1} 
$$
$$
=\bigl(\sin (j + 1/2)x\bigr)\bigm/\sin (x/2)
$$
   is an even trigonometric polynomial.  The Fej\'er kernel of order $j - 1$:
$$
\mathcal K_{j} (x) := j^{-1}\sum_{k=0}^{j-1} \mathcal D_k (x) =
\sum_{|k|\le n} \bigl(1 - |k|/j\bigr) e^{ikx} 
$$
$$
=\bigl(\sin (jx/2)\bigr)^2\bigm /\bigl(j (\sin (x/2)\bigr)^2\bigr).
$$
The Fej\'er kernel is an even nonnegative trigonometric
polynomial of order $j-1$.  It satisfies the obvious relations
\be\label{FKm}
\| \mathcal K_{j} \|_1 = 1, \qquad \| \mathcal K_{j} \|_{\infty} = j.
\ee
The de la Vall\'ee Poussin kernel
\be\label{A2}
\mathcal V_{j} (x) := j^{-1}\sum_{l=j}^{2j-1} \mathcal D_l (x)= 2\cK_{2j}(x)-\cK_j(x) 
\ee
is an even trigonometric
polynomial of order $2j - 1$.

In the two-variate case  define the Fej\'er and de la Vall\'ee Poussin kernels as follows:
$$
\cK_\bj(\bx):=  \cK_{j_1}(x_1) \cK_{j_2}(x_2), \qquad \mathcal V_{\mathbf j} (\bx) := \cV_{j_1}(x_1) \cV_{j_2}(x_2)    ,\qquad
\mathbf j = (j_1,j_2) .
$$

Let $\{b_n\}_{n=0}^{\infty}$, $b_0=b_1 =1$, $b_n = b_{n-1}+b_{n-2}$,
$n\ge 2$, -- be the Fibonacci numbers.
Denote 
$$
\mathbf y^{\nu}:=\bigl(2\pi\nu/b_n, 2\pi\{\nu
b_{n-1}/b_n\}\bigr), \quad \nu = 1,\dots,b_n,\quad \cF_n:=\{\by^\nu\}_{\nu=1}^{b_n}.
$$
In this definition $\{a\}$ is the fractional part of the number $a$. The cardinality of the set $\cF_n$ is equal to $b_n$. 

For $N\in\N$ define the {\it hyperbolic cross} in dimension $2$ as follows:
$$
\Gamma(N):=\Gamma(N,2):= \left\{\bk\in\Z^2: \prod_{j=1}^2 \max(|k_j|,1) \le N\right\}.
$$
The following lemma is well known (see, for instance, \cite{VTbookMA}, p.274).

\begin{Lemma}\label{AL1} There exists an absolute constant $\gamma > 0$
such that for any $n > 2$ for the $2$-dimensional hyperbolic cross we have
  for any $f\in \Tr(\Gamma(N))$ with $N\le \gamma b_n$  
$$
b_n^{-1}\sum_{\nu=1}^{b_n}f\bigl(2\pi\nu/b_n,
2\pi\{\nu b_{n-1} /b_n \}\bigr) = (2\pi)^{-2}\int_{\T^2} f(\bx)d\bx.
$$
\end{Lemma}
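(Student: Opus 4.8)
The plan is to use linearity of the cubature formula and reduce to the exponentials. Both sides of the claimed identity are linear in $f$, and $\Tr(\Gamma(N))$ is spanned by $\{e^{i(\bk,\bx)}:\bk\in\Gamma(N)\}$, so it suffices to prove, for each $\bk=(k_1,k_2)\in\Gamma(N)$, that
$$
b_n^{-1}\sum_{\nu=1}^{b_n}e^{i(\bk,\by^\nu)}=(2\pi)^{-2}\int_{\T^2}e^{i(\bk,\bx)}\,d\bx,
$$
where the right side equals $1$ if $\bk=(0,0)$ and $0$ otherwise. To evaluate the left side I would note that $k_2\in\Z$ makes the fractional part disappear, $e^{2\pi ik_2\{\nu b_{n-1}/b_n\}}=e^{2\pi ik_2\nu b_{n-1}/b_n}$, so the left side equals $b_n^{-1}\sum_{\nu=1}^{b_n}e^{2\pi i\nu(k_1+k_2b_{n-1})/b_n}$, a geometric sum which is $1$ when $b_n\mid k_1+k_2b_{n-1}$ and $0$ otherwise. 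Hence the cubature formula is exact on all of $\Tr(\Gamma(N))$ exactly when the congruence $k_1+k_2b_{n-1}\equiv0\ (\mathrm{mod}\ b_n)$ has no solution in $\Gamma(N)$ other than the origin.

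It then remains to prove the lattice statement: for a suitable absolute constant $\gamma\in(0,1)$, every $\bk\ne(0,0)$ with $k_1+k_2b_{n-1}\equiv0\ (\mathrm{mod}\ b_n)$ obeys $\max(|k_1|,1)\max(|k_2|,1)>\gamma b_n$, so that no such $\bk$ lies in $\Gamma(N)$ once $N\le\gamma b_n$. The axis cases are immediate: if $k_2=0$ then $b_n\mid k_1$, hence $|k_1|\ge b_n$; if $k_1=0$ then $b_n\mid k_2b_{n-1}$, hence $b_n\mid k_2$ because consecutive Fibonacci numbers are coprime, so $|k_2|\ge b_n$. For $k_1k_2\ne0$ I would write $k_1=mb_n-k_2b_{n-1}$ with $m\in\Z$, whence
$$
|k_1|\ \ge\ \min_{j\in\Z}|k_2b_{n-1}-jb_n|\ =\ b_n\bigl\|k_2b_{n-1}/b_n\bigr\|
$$
($\|\cdot\|$ being distance to the nearest integer), so $|k_1|\,|k_2|\ge b_n\cdot\bigl(|k_2|\,\|k_2b_{n-1}/b_n\|\bigr)$; the desired bound then follows from the Diophantine estimate $|k_2|\,\|k_2b_{n-1}/b_n\|\ge c$ for some absolute $c>0$ and all $1\le|k_2|<b_n$ (any $\gamma<c$ works).

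This Diophantine estimate is the crux and the main obstacle: it says that $b_{n-1}/b_n$ is badly approximable with a constant independent of $n$. I would prove it from the continued fraction of $b_{n-1}/b_n$, all of whose partial quotients equal $1$, so that its convergents have denominators $b_0,b_1,\dots,b_{n-1}$; the standard monotonicity $\|qb_{n-1}/b_n\|\ge\|b_jb_{n-1}/b_n\|$ for $b_j\le q<b_{j+1}$, together with Cassini's identity $b_{n-1}^2-b_nb_{n-2}=(-1)^{n-1}$ (which pins the sizes $\|b_jb_{n-1}/b_n\|$ up to bounded factors), yields $q\,\|qb_{n-1}/b_n\|\ge c$ uniformly in $q$ and $n$. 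This is precisely the classical estimate behind exactness of Fibonacci cubature on hyperbolic crosses (see \cite{VTbookMA}, p.~274); the only leftover is the routine check that the constants are uniform over the small values $n>2$, where $b_n,b_{n-1},b_{n-2}$ are bounded below by absolute constants.
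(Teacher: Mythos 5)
Your proof is correct and matches the standard argument: the paper itself gives no proof of Lemma \ref{AL1}, citing it as well known from \cite{VTbookMA}, p.~274, and your route (reduction to exponentials by linearity, the congruence $k_1+k_2b_{n-1}\equiv 0 \pmod{b_n}$ characterizing the non-annihilated frequencies, and the uniform bad approximability of $b_{n-1}/b_n$ coming from its all-ones continued fraction) is exactly the one used there. The only cosmetic imprecision is in the crux step: pinning down $\|b_jb_{n-1}/b_n\|$ exactly uses the general identity $b_jb_{n-1}\equiv\pm b_{n-j-1}\pmod{b_n}$ (the Fibonacci addition formula), of which Cassini's identity is only the case $j=n-1$; with that identity, $b_j\|b_jb_{n-1}/b_n\|=b_jb_{n-j-1}/b_n$ is bounded below by an absolute constant and your argument closes as described.
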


 As above, for $\ba\in \bbC^m$ define the norm
$$
\|\ba\|_{p} := \left(\frac{1}{m}\sum_{i=1}^m |a_i|^p\right)^{1/p},\quad 1\le p<\infty; \quad \|\ba\|_\infty := \max_{i}|a_i|.
$$

  Theorem \ref{FT1} follows from the proof of Theorem 1.1 in \cite{VT190}. 

 \begin{Theorem}\label{FT1} Let $\gamma$ be from Lemma \ref{AL1}. For a given $n\in\N$ denote $n'\in\N$ to be the largest satisfying $2^{n'} \le \gamma b_n/9$. The Fibonacci point set $\cF_n$ provides the following two properties for the collection $H(n',2)$.  
 
{\bf (I).} For any $\bs$ satisfying $\|\bs\|_1 \le n'$ and any $f \in 
\Tr(R(\bs))$ we have $(2^\bs := (2^{s_1},2^{s_2}))$
$$
 f(\bx)=(2\pi)^{-2} \int_{\T^2} f(\by)\cV_{2^\bs}(\bx-\by)d\by = \frac{1}{b_n} \sum_{\nu=1}^{b_n} f(\by^\nu)\cV_{2^\bs}(\bx-\by^\nu).
$$

{\bf (II).}  For any $\bs$ satisfying $\|\bs\|_1 \le n'$  we have  
$$
\left\|\frac{1}{b_n}\sum_{\nu=1}^{b_n}  a_\nu|\cV_{2^{\bs}}(\bx-\by^\nu)|\right\|_\infty \le 9 \|\ba\|_{\infty},\quad \ba = (a_1,\dots,a_{b_n}).
$$
\end{Theorem}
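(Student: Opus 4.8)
\medskip

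The plan is to reduce both items to the exactness of the Fibonacci quadrature, i.e. to Lemma \ref{AL1}; the only real work is a careful bookkeeping of spectra, which also explains the role of the factor $9$ in the definition of $n'$. Throughout I write $\cV_{2^\bs}(\bz)=\cV_{2^{s_1}}(z_1)\cV_{2^{s_2}}(z_2)$ and use that, by (\ref{A2}), each $\cV_j$ is a trigonometric polynomial of degree $2j-1$ whose Fourier coefficients equal $1$ on $\{|k|\le j\}$ and vanish on $\{|k|\ge 2j\}$.

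For {\bf (I)}, the first equality is the reproducing property of the de la Vall\'ee Poussin kernel: since $f\in\Tr(R(\bs))$ has frequencies $\bk$ with $|k_\ell|\le 2^{s_\ell}-1<2^{s_\ell}$ and $\widehat{\cV_{2^{s_\ell}}}(k)=1$ for $|k|\le 2^{s_\ell}$, the convolution $(2\pi)^{-2}\int_{\T^2}f(\by)\cV_{2^\bs}(\bx-\by)\,d\by$ returns $f(\bx)$ coefficientwise. For the second equality I fix $\bx$ and regard $g(\by):=f(\by)\cV_{2^\bs}(\bx-\by)$ as a trigonometric polynomial in $\by$; its frequencies $(l_1,l_2)$ satisfy $|l_\ell|\le(2^{s_\ell}-1)+(2^{s_\ell+1}-1)<3\cdot 2^{s_\ell}$, hence $\prod_{\ell=1}^2\max(|l_\ell|,1)\le 9\cdot 2^{\|\bs\|_1}\le 9\cdot 2^{n'}\le\gamma b_n$ by the choice of $n'$. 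Thus $g\in\Tr(\Gamma(N))$ with $N\le\gamma b_n$, so Lemma \ref{AL1} applied to $g$ gives $(2\pi)^{-2}\int_{\T^2}g(\by)\,d\by=b_n^{-1}\sum_{\nu=1}^{b_n}g(\by^\nu)$, which is precisely the asserted identity.

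For {\bf (II)}, I first bound $|a_\nu|\le\|\ba\|_\infty$ and pull that factor out, so it suffices to show $b_n^{-1}\sum_{\nu=1}^{b_n}|\cV_{2^\bs}(\bx-\by^\nu)|\le 9$ for every $\bx$. Here I would replace $|\cV_j|$ by the nonnegative majorant $W_j:=2\cK_{2j}+\cK_j$, which satisfies $W_j\ge|\cV_j|$ by (\ref{A2}) together with $\cK_m\ge 0$, has degree $2j-1$, and has mean $(2\pi)^{-1}\int_{\T}W_j(u)\,du=3$ (from $\|\cK_m\|_1=1$ in (\ref{FKm}) and $\cK_m\ge 0$). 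The product $W_{2^{s_1}}(x_1-y_1)W_{2^{s_2}}(x_2-y_2)$, viewed in $\by$, has frequencies $(m_1,m_2)$ with $|m_\ell|<2^{s_\ell+1}$, so $\prod_{\ell=1}^2\max(|m_\ell|,1)\le 4\cdot 2^{\|\bs\|_1}\le 4\cdot 2^{n'}<\gamma b_n$; Lemma \ref{AL1} then yields $b_n^{-1}\sum_{\nu=1}^{b_n}W_{2^{s_1}}(x_1-y_1^\nu)W_{2^{s_2}}(x_2-y_2^\nu)=\big((2\pi)^{-1}\int_{\T}W_{2^{s_1}}(u)\,du\big)\big((2\pi)^{-1}\int_{\T}W_{2^{s_2}}(v)\,dv\big)=9$. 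Since $|\cV_{2^\bs}(\bz)|\le W_{2^{s_1}}(z_1)W_{2^{s_2}}(z_2)$ pointwise, the left side is $\le 9$, and (II) follows.

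The only genuinely delicate point is the spectral accounting underlying both items — making sure $f\cdot\cV_{2^\bs}$ and $W_{2^{s_1}}\cdot W_{2^{s_2}}$ have frequency supports inside $\Gamma(\gamma b_n)$. This is exactly why $n'$ is defined by $2^{n'}\le\gamma b_n/9$: the factor $9=3^2$ absorbs the threefold spectral spread of each de la Vall\'ee Poussin factor in {\bf (I)}, and it is comfortably larger than the factor $4=2^2$ needed in {\bf (II)}; the constant $9$ appearing in {\bf (II)} is then just the product of the two $L_1$-sizes (the means of the majorants $W_j$), each equal to $3$. Everything else is routine.

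\medskip
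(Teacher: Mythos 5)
Your argument is correct, and it is essentially the standard route that the paper itself defers to (it cites the proof of Theorem 1.1 in \cite{VT190} rather than proving Theorem \ref{FT1} in the text): part {\bf (I)} via the reproducing property of $\cV_{2^\bs}$ on $\Tr(R(\bs))$ plus exactness of the Fibonacci cubature on the hyperbolic cross containing the spectrum of $f(\by)\cV_{2^\bs}(\bx-\by)$, and part {\bf (II)} via the nonnegative majorant $2\cK_{2j}+\cK_j\ge|\cV_j|$ and exactness again. Your spectral bookkeeping (the factors $9$ and $4$, and the choice $2^{n'}\le\gamma b_n/9$) is accurate, so nothing is missing.
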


We now define the recovery algorithm. First, we define for $f\in \cC(\T^2)$
$$
V_\bs(f) := V_\bs(\cF_n)(f) := \frac{1}{b_n}\sum_{\nu=1}^{b_n}  f(\by^\nu)\cV_{2^{\bs}}(\bx-\by^\nu).
$$
 It is a simple linear operator of discrete convolution. Second, we define
 $$
 \bs^o(f) := \text{arg}\min_{ \bs: \|\bs\|_1=n'}\|f-V_\bs(f)\|_\infty,
 $$
  \be\label{F1}
 V^n(f):= V_{\bs^o(f)}(f).
\ee
 
 We now prove the following analog of Proposition \ref{acP1}.
 
 \begin{Proposition}\label{FP1}  For the collection $H(n',2)$ and for   any  function $ f \in \cC(\T^2)$ we have
 \be\label{F3}
  \|f-V^n(f)\|_\infty \le  10 \min_{\bs: \|\bs\|_1=n'} d(f,\Tr(R(\bs)))_\infty.
 \ee
 \end{Proposition}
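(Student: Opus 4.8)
\textbf{Proof plan for Proposition \ref{FP1}.}

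The plan is to mimic the proof of Theorem \ref{AT1} / Theorem \ref{udT1}, but with the linear sampling operator $V_\bs$ playing the role of the (non-constructive) $\ell(\xi,X(n))$ operator, exploiting properties \textbf{(I)} and \textbf{(II)} of Theorem \ref{FT1}. First I would fix $\bs$ with $\|\bs\|_1 = n'$ and show the Lebesgue-type bound
\[
\|f - V_\bs(f)\|_\infty \le 10\, d(f,\Tr(R(\bs)))_\infty .
\]
To do this, let $t \in \Tr(R(\bs))$ be a near-best approximant, so $\|f-t\|_\infty = d(f,\Tr(R(\bs)))_\infty =: \delta$. By property \textbf{(I)} applied to $t$, we have $V_\bs(t) = t$ (the cubature formula with Fibonacci nodes reproduces $t$ exactly via the de la Vallée Poussin kernel, since $\|\bs\|_1 \le n'$). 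Hence $f - V_\bs(f) = (f-t) - V_\bs(f-t)$, and so $\|f-V_\bs(f)\|_\infty \le \|f-t\|_\infty + \|V_\bs(f-t)\|_\infty \le \delta + \|V_\bs(f-t)\|_\infty$.

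The heart of the argument is to bound $\|V_\bs(f-t)\|_\infty$ by $9\delta$. Writing $g := f-t$, we have pointwise
\[
|V_\bs(g)(\bx)| = \left|\frac{1}{b_n}\sum_{\nu=1}^{b_n} g(\by^\nu)\,\cV_{2^\bs}(\bx-\by^\nu)\right|
\le \frac{1}{b_n}\sum_{\nu=1}^{b_n} |g(\by^\nu)|\,|\cV_{2^\bs}(\bx-\by^\nu)|
\le \frac{1}{b_n}\sum_{\nu=1}^{b_n} \|g\|_\infty\,|\cV_{2^\bs}(\bx-\by^\nu)| ,
\]
so applying property \textbf{(II)} with $a_\nu := |g(\by^\nu)| \le \|g\|_\infty = \delta$ gives $\|V_\bs(g)\|_\infty \le 9\|g\|_\infty = 9\delta$. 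Combining, $\|f-V_\bs(f)\|_\infty \le \delta + 9\delta = 10\delta = 10\, d(f,\Tr(R(\bs)))_\infty$ for \emph{every} admissible $\bs$.

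Finally I would pass to the adaptive choice: by definition \eqref{F1}, $V^n(f) = V_{\bs^o(f)}(f)$ where $\bs^o(f)$ minimizes $\|f-V_\bs(f)\|_\infty$ over $\bs$ with $\|\bs\|_1 = n'$. Therefore
\[
\|f - V^n(f)\|_\infty = \min_{\bs:\|\bs\|_1 = n'} \|f - V_\bs(f)\|_\infty \le \min_{\bs:\|\bs\|_1 = n'} 10\, d(f,\Tr(R(\bs)))_\infty = 10 \min_{\bs:\|\bs\|_1 = n'} d(f,\Tr(R(\bs)))_\infty,
\]
which is \eqref{F3}. The only genuinely delicate point is the reproduction identity $V_\bs(t) = t$, but this is exactly property \textbf{(I)} of Theorem \ref{FT1}, so here it is available for free; the rest is the triangle inequality plus the boundedness estimate \textbf{(II)}.
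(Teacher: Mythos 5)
Your proposal is correct and follows essentially the same route as the paper: property \textbf{(I)} gives the reproduction $V_\bs(t)=t$, property \textbf{(II)} gives the operator bound $\|V_\bs(g)\|_\infty\le 9\|g\|_\infty$, the triangle inequality yields $\|f-V_\bs(f)\|_\infty\le 10\,d(f,\Tr(R(\bs)))_\infty$ for each admissible $\bs$, and the adaptive definition of $V^n$ finishes the argument. The only cosmetic difference is that you work with a (near-)best approximant $t$ while the paper bounds by $10\|f-t\|_\infty$ for arbitrary $t$ and then takes the infimum, which is the same computation.
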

 \begin{proof} Let $\bs$ be such that $\|\bs\|_1=n'$. Then by property {\bf (I)} from Theorem \ref{FT1}
 we obtain that for any $t\in \Tr(R(\bs))$ 
 $$
 V_\bs(t) = t.
 $$
 By  property {\bf (II)} from Theorem \ref{FT1} we obtain
 $$
 \left\|\frac{1}{b_n}\sum_{\nu=1}^{b_n} | \cV_{2^{\bs}}(\bx-\by^\nu)|\right\|_\infty \le 9.
 $$
 Therefore, for any $g\in\cC(\T^2)$ we have
 $$
 \|V_\bs(g)\|_\infty \le 9\|g\|_\infty.
 $$
 Thus, for any $t\in \Tr(R(\bs))$ we have
 $$
 \|f-V_\bs(f)\|_\infty =  \|f-t-(V_\bs(f-t))\|_\infty\le 10\|f-t\|_\infty.
 $$
 Taking infimum over all $t\in \Tr(R(\bs))$ we obtain
 $$
  \|f-V_\bs(f)\|_\infty \le 10 d(f,\Tr(R(\bs))_\infty.
  $$
  This bound and the definition of $V^n(f)$ complete the proof.
 
 \end{proof}
 
 Proposition \ref{FP1} and bounds (\ref{ac2}), (\ref{ac3}) imply the following analog of Proposition \ref{acP2}. 
 
 \begin{Proposition}\label{FP2} Let $n \in \N$.   Then for   any  function $ f \in \C(\T^2)$, which belongs to either $W^\br_{\infty,\alpha}$ or $H^\br_\infty$, we have
 \be\label{F5}
  \|f-V^n(f)\|_\infty \le  C(\br)b_n^{-g(\br)}.
 \ee
 \end{Proposition}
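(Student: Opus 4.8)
The plan is to combine Proposition \ref{FP1} with the known approximation bounds (\ref{ac2}) and (\ref{ac3}) in essentially the same way that Proposition \ref{acP2} was deduced from Proposition \ref{acP1}. Since $n' \in \N$ is the largest integer with $2^{n'} \le \gamma b_n/9$, we have $2^{n'} \ge \gamma b_n/18$, so $2^{n'} \asymp b_n$ with constants depending only on $\gamma$ (hence absolute). This comparison is the bookkeeping device that lets us pass between the scale $2^{n'}$ appearing in $H(n',2)$ and the scale $b_n$ appearing in the final bound.

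First I would fix $f$ in either $W^\br_{\infty,\alpha}$ or $H^\br_\infty$ and apply Proposition \ref{FP1} directly, which gives
$$
\|f - V^n(f)\|_\infty \le 10 \min_{\bs:\|\bs\|_1=n'} d(f,\Tr(R(\bs)))_\infty.
$$
Next I would invoke (\ref{ac2}) with $q=\infty$ (or (\ref{ac3}) with $q=\infty$, depending on which class $f$ lies in), applied with $n$ replaced by $n'$: this yields
$$
\min_{\bs:\|\bs\|_1=n'} d(f,\Tr(R(\bs)))_\infty \le C(\br,d)\, 2^{-g(\br)n'}.
$$
Here $d=2$, so the constant is $C(\br,2)$, i.e.\ depends only on $\br$. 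Note that (\ref{ac2}) and (\ref{ac3}) are stated as bounds on the supremum over the class, so in particular they hold for the individual function $f$ in that class.

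Finally I would translate $2^{-g(\br)n'}$ into $b_n^{-g(\br)}$. Since $2^{n'} \ge \gamma b_n/18$, we get $2^{-n'} \le 18/(\gamma b_n)$, hence $2^{-g(\br)n'} \le (18/\gamma)^{g(\br)} b_n^{-g(\br)}$. Because $\gamma$ is an absolute constant and $g(\br)$ depends only on $\br$, the factor $(18/\gamma)^{g(\br)}$ can be absorbed into a new constant $C(\br)$. Chaining the three displayed inequalities gives
$$
\|f - V^n(f)\|_\infty \le 10\, C(\br,2)\,(18/\gamma)^{g(\br)}\, b_n^{-g(\br)} =: C(\br)\, b_n^{-g(\br)},
$$
which is (\ref{F5}). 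There is no genuine obstacle here: the only mild point of care is ensuring that $n'$ is well defined and positive for all $n$ (or at least for all $n$ large enough that $\gamma b_n/9 \ge 1$, with small $n$ absorbed by enlarging the constant), and that one applies the correct bound—(\ref{ac2}) for the Sobolev case, (\ref{ac3}) for the Nikol'skii case—since the statement covers both classes simultaneously. Everything else is the routine substitution already rehearsed in the passage from Proposition \ref{acP1} to Proposition \ref{acP2}.
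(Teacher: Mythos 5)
Your proposal is correct and follows exactly the route the paper intends: the paper states Proposition \ref{FP2} as an immediate consequence of Proposition \ref{FP1} together with the bounds (\ref{ac2}) and (\ref{ac3}) applied at level $n'$, with the comparison $2^{n'}\asymp b_n$ absorbing the constants. Your write-up simply makes explicit the bookkeeping the paper leaves implicit, so there is nothing to add.
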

 
 \begin{Remark}\label{FRF} The linear recovery operator $V^n$ only uses function values at $b_n$ points. It is known (see \cite{VTbookMA}, p.125) that for each individual class 
$W^\br_{\infty,\alpha}$ or $H^\br_\infty$ the error of linear recovery with $m$ function values 
cannot be better (in the sense of order) than $m^{-g(\br)}$. Thus, Proposition \ref{FP2} shows
that the operator $V^n$ provides optimal in the sense of order recovery for each class 
$W^\br_{\infty,\alpha}$ or $H^\br_\infty$.
\end{Remark}
 
 {\bf Case $d\ge 3$. Korobov points.} Here we extend the results of this section in the case $d=2$ to the case $d\ge 3$. Instead of the Fibonacci point sets we consider the Korobov point sets. We obtain results somewhat similar to those from above but not as sharp as results on the Fibonacci point sets (compare Remarks \ref{FRF} and \ref{FRK}). It is a well known phenomenon in numerical integration. We prove a conditional result under the assumption that the Korobov cubature formulas are exact on a certain subspace of trigonometric polynomials with frequencies  from a hyperbolic cross. 
 There are results that guarantee existence of such cubature formulas.  
 
  Let $m\in\N$, $\mathbf h := (h_1,\dots,h_d)$, $h_1,\dots,h_d\in\Z$.
We consider the cubature formulas
$$
P_m (f,\mathbf h):= m^{-1}\sum_{\nu=1}^{m}f\left ( 2\pi\left  \{\frac{\nu h_1}
{m}\right\},\dots, 2\pi\left \{\frac{\nu h_d}{m}\right\}\right),
$$
which are called the {\it Korobov cubature formulas}.  In the case $d=2$, $m=b_n$, $\mathbf h = (1,b_{n-1})$ we have
$$
P_m (f,\mathbf h) =  \frac{1}{b_n}\sum_{\by\in \cF_n} f(\by).
$$

Denote 
$$
\mathbf w^{\nu}:=\left ( 2\pi\left \{\frac{\nu h_1}
{m}\right\},\dots, 2\pi\left \{\frac{\nu h_d}{m}\right\}\right), \quad \nu = 1,\dots,m,\quad \cR_m(\bh):=\{\bw^\nu\}_{\nu=1}^m.
$$
The set $\cR_m(\bh)$ is called the {\it Korobov point set}.    

For $N\in\N$ define the {\it hyperbolic cross}   by
$$
\Gamma(N,d):= \left\{\bk= (k_1,\dots,k_d)\in\Z^d\colon \prod_{j=1}^d \max(|k_j|,1) \le N\right\}.
$$
Denote 
$$
\Tr(N,d) := \left\{f\, :\, f(\bx)= \sum_{\bk\in \Gamma(N,d)} c_\bk e^{i(\bk,\bx)}\right\}.
$$
 
\begin{Definition}\label{CD1} We say that the Korobov cubature formula $P_m(\cdot,\bh)$ is exact on $\Tr(N,d)$ if condition
\be\label{ex}
P_m(f,\bh) = (2\pi)^{-d}\int_{\T^d} f(\bx)d\bx, \quad \forall f\in \Tr(N,d),
\ee
   is satisfied.
\end{Definition} 

 {\bf Special Korobov point sets.} Let $N\in \N$ be given. Clearly, we are interested in as small $m$ as possible such that there exists a Korobov cubature formula, which is exact on $\Tr(N,d)$. In the case of $d=2$ the Fibonacci cubature formula is an ideal in a certain sense choice. 
There is no known Korobov cubature formulas in case $d\ge 3$, which are as good as  the Fibonacci cubature formula in case $d=2$. We now formulate some known results in this direction. Consider  a special case $\mathbf h = (1,h,h^2,\dots,h^{d-1})$, $h\in\N$. In this case we write in the notation of $\cR_m(\bh)$ and $P_m(\cdot,\bh)$ the scalar $h$ instead of the vector $\mathbf h$, namely, $\cR_m(h,d)$ and $P_m(\cdot,h,d)$. The following Lemma \ref{CL1} is a well known result (see, for instance \cite{VTbookMA}, p.285). 

\begin{Lemma}\label{CL1} Let $m$ and $N$ be a prime  
and a natural number, respectively, such that
\be\label{C3}
\bigl|\Gamma(N,d)\bigr| < (m-1)/d .
\ee
Then there is a natural number $h\in [1,m)$ such that for any $f\in \Tr(N,d)$ we have 
$$
P_m(f,h,d) = (2\pi)^{-d}\int_{\T^d} f(\bx)d\bx.
$$
\end{Lemma}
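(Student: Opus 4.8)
\textbf{Proof proposal for Lemma \ref{CL1}.}

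The plan is to produce $h$ by a counting (averaging) argument over all candidate values $h\in\{1,\dots,m-1\}$, exploiting that $m$ is prime. The starting observation is that the Korobov formula $P_m(f,h,d)$ is linear in $f$, so it suffices to verify exactness on the exponentials $e^{i(\bk,\bx)}$ with $\bk\in\Gamma(N,d)$. For such a frequency $\bk=(k_1,\dots,k_d)$ one computes directly
$$
P_m\big(e^{i(\bk,\cdot)},h,d\big) = \frac{1}{m}\sum_{\nu=1}^{m} e^{2\pi i\nu(k_1 + k_2 h + \cdots + k_d h^{d-1})/m},
$$
which is the normalized sum of the $m$-th roots of unity raised to the power $a(\bk,h):= k_1 + k_2 h + \cdots + k_d h^{d-1}$. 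This sum equals $1$ if $m \mid a(\bk,h)$ and equals $0$ otherwise. Since $(2\pi)^{-d}\int_{\T^d} e^{i(\bk,\bx)}d\bx$ equals $1$ for $\bk=\mathbf 0$ and $0$ for $\bk\ne\mathbf 0$, and $\mathbf 0\in\Gamma(N,d)$ always contributes correctly (the polynomial $a(\mathbf 0,h)\equiv 0$ is divisible by $m$), exactness on $\Tr(N,d)$ is equivalent to the statement: for every nonzero $\bk\in\Gamma(N,d)$ we have $m\nmid a(\bk,h)$.

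So I would estimate the number of ``bad'' values of $h$. Fix a nonzero $\bk\in\Gamma(N,d)$. The map $h\mapsto a(\bk,h) = \sum_{j=1}^d k_j h^{j-1}$ is a polynomial of degree at most $d-1$ over the field $\ZZ/m\ZZ$ (here we use that $m$ is prime, so $\ZZ/m\ZZ$ is a field), and it is not the zero polynomial modulo $m$: indeed each $|k_j|\le N < (m-1)/d < m$, so the nonzero coefficients of $\bk$ remain nonzero mod $m$, and since $\bk\ne\mathbf 0$ at least one coefficient is nonzero mod $m$. A nonzero polynomial of degree $\le d-1$ over a field has at most $d-1$ roots, so at most $d-1$ values of $h$ in $\{1,\dots,m-1\}$ satisfy $m\mid a(\bk,h)$. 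Summing over the nonzero $\bk\in\Gamma(N,d)$, the total number of bad $h$ is at most $(d-1)(|\Gamma(N,d)|-1) < (d-1)|\Gamma(N,d)|$. By hypothesis \eqref{C3}, $|\Gamma(N,d)| < (m-1)/d$, hence the number of bad $h$ is strictly less than $(d-1)(m-1)/d < m-1$. Therefore at least one $h\in\{1,\dots,m-1\}$ is good, and for that $h$ the formula $P_m(\cdot,h,d)$ is exact on $\Tr(N,d)$.

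The only slightly delicate point—the ``main obstacle'' such as it is—is making sure the polynomial $a(\bk,\cdot)$ is genuinely nonzero modulo $m$ as a polynomial (not merely as an integer for the particular $h$), which is exactly where primality of $m$ and the size bound $N<m$ enter; one must also be a little careful that the bad-set bound is counted over $\{1,\dots,m-1\}$ (a set of size $m-1$), which is why \eqref{C3} is phrased with $(m-1)/d$ rather than $m/d$. Everything else is the standard root-counting argument in $\ZZ/m\ZZ$.
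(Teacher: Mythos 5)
Your argument is correct and is exactly the standard averaging/root-counting proof of Korobov's existence theorem; the paper itself gives no proof, citing it as a known result (\cite{VTbookMA}, p.~285), and your write-up matches that classical argument, including the one genuinely delicate point (that $a(\bk,\cdot)$ is a nonzero polynomial over $\ZZ/m\ZZ$ because $N<m$, which follows from $N<|\Gamma(N,d)|<(m-1)/d$).
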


Note that the cardinality of $\Gamma(N,d)$ is of order $N(\log N)^{d-1}$ and, therefore, the largest $N$, satisfying (\ref{C3}), is of order $m(\log m)^{1-d}$. 

In the same way as Theorem \ref{FT1} was derived from Lemma \ref{AL1} in \cite{VT190}
the following Theorem \ref{CT1} can be derived from Definition \ref{CD1}. We do not present the proof here.  Lemma \ref{CL1} provides existence of special Korobov point sets satisfying Definition \ref{CD1}.  

Let $\cV_\bj(\bx):= \prod_{i=1}^d \cV_{j_i}(x_i)$ be the $d$-variate de la Vall\'ee Poussin kernels for $\bj = (j_1,\dots,j_d)$.

\begin{Theorem}\label{CT1} Let the Korobov cubature formula $P_m(\cdot,\bh)$ be exact on $\Tr(N,d)$ and let $\ell \in \N$ be the largest satisfying $2^\ell \le 3^{-d}N$. Then  the Korobov point set $\cR_m(\bh)$ provides the following two properties for the collection $H(\ell,d)$.  

{\bf (I).} For any $\bs\in\N^d$, satisfying $\|\bs\|_1 \le \ell$, and any $f \in 
\Tr(R(\bs))$ we have
$$
 f(\bx)=(2\pi)^{-d} \int_{\T^d} f(\by)\cV_{2^\bs}(\bx-\by)d\by = \frac{1}{m} \sum_{\nu=1}^{m} f(\bw^\nu)\cV_{2^\bs}(\bx-\bw^\nu).
$$

{\bf (II).}   
For any $\bs$ satisfying $\|\bs\|_1 \le \ell$   we have
$$
\left\|\frac{1}{m}\sum_{\nu=1}^{m}  a_\nu|\cV_{2^\bs}(\bx-\bw^\nu)|\right\|_\infty \le 3^d \|\ba\|_{\infty},\quad \ba = (a_1,\dots,a_{m}).
$$
\end{Theorem}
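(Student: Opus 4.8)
The plan is to mimic, in the $d$-dimensional Korobov setting, the derivation of Theorem~\ref{FT1} from Lemma~\ref{AL1}: both assertions will be reduced to the exactness property (\ref{ex}) of the Korobov formula on $\Tr(N,d)$, and the only real work is to verify that every trigonometric polynomial to which (\ref{ex}) gets applied has all of its frequencies inside $\Gamma(N,d)$.

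For property {\bf (I)} I would first record the reproducing property of the de la Vall\'ee Poussin kernel. From $\cV_j(x)=j^{-1}\sum_{l=j}^{2j-1}\cD_l(x)$ one computes $\widehat{\cV_j}(k)=1$ for $|k|\le j$ and $\widehat{\cV_j}(k)=0$ for $|k|\ge 2j$, so that for the tensor product $\widehat{\cV_{2^\bs}}(\bk)=1$ on $R(\bs)$. Consequently, for $f\in\Tr(R(\bs))$ the convolution $(2\pi)^{-d}\int_{\T^d}f(\by)\cV_{2^\bs}(\bx-\by)\,d\by$ has the same Fourier coefficients as $f$ and therefore equals $f$; this is the first equality. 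For the second one, I would use that $\cV_{2^{s_i}}$ is a trigonometric polynomial of order $2^{s_i+1}-1$, so that for each fixed $\bx$ the function $\by\mapsto f(\by)\cV_{2^\bs}(\bx-\by)$ is a trigonometric polynomial whose every frequency $\bk$ satisfies $|k_i|\le(2^{s_i}-1)+(2^{s_i+1}-1)<3\cdot2^{s_i}$, hence $\prod_{i=1}^d\max(|k_i|,1)<3^d2^{\|\bs\|_1}\le3^d2^\ell\le N$. Thus this polynomial belongs to $\Tr(N,d)$, and applying (\ref{ex}) to it (for fixed $\bx$) gives exactly the second equality.

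For property {\bf (II)}, since $|a_\nu|\le\|\ba\|_\infty$ for all $\nu$, it suffices to prove $\frac1m\sum_{\nu=1}^m|\cV_{2^\bs}(\bx-\bw^\nu)|\le3^d$ for every $\bx$. Using $\cV_j=2\cK_{2j}-\cK_j$ and the nonnegativity of the Fej\'er kernels I would bound $|\cV_{2^{s_i}}(y_i)|\le2\cK_{2^{s_i+1}}(y_i)+\cK_{2^{s_i}}(y_i)$ and hence, taking the tensor product,
$$
|\cV_{2^\bs}(\by)|\le\prod_{i=1}^d\bigl(2\cK_{2^{s_i+1}}(y_i)+\cK_{2^{s_i}}(y_i)\bigr).
$$
Expanding the right-hand side produces a sum of $2^d$ terms, each of the form $c\prod_{i=1}^d\cK_{n_i}(y_i)$ with $c$ a product of factors taken from $\{1,2\}$ and $n_i\in\{2^{s_i},2^{s_i+1}\}$, the coefficients $c$ summing to $\prod_{i=1}^d(1+2)=3^d$. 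Each $\prod_{i=1}^d\cK_{n_i}(x_i-\cdot)$ is a trigonometric polynomial whose frequencies in the $i$-th coordinate are bounded by $n_i-1<2^{s_i+1}$, so it lies in $\Tr(N,d)$ because $\prod_{i=1}^d\max(|\cdot|,1)<2^{d}2^{\|\bs\|_1}\le2^{d+\ell}\le3^d2^\ell\le N$; applying (\ref{ex}) together with $\|\cK_j\|_1=1$ (see (\ref{FKm})) and $\cK_j\ge0$ yields, for every $\bx$,
$$
\frac1m\sum_{\nu=1}^m\prod_{i=1}^d\cK_{n_i}(x_i-w_i^\nu)=(2\pi)^{-d}\int_{\T^d}\prod_{i=1}^d\cK_{n_i}(x_i-y_i)\,d\by=\prod_{i=1}^d\|\cK_{n_i}\|_1=1 .
$$
Summing the $2^d$ terms with their coefficients then gives $\frac1m\sum_{\nu=1}^m|\cV_{2^\bs}(\bx-\bw^\nu)|\le3^d$, as required.

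The Fourier-coefficient identities for $\cV_j$ and $\cK_j$ and the normalization $\|\cK_j\|_1=1$ are routine. The step that needs care — and where the constants $3^{-d}$ in the definition of $\ell$ and $3^d$ in both conclusions come from — is the frequency bookkeeping: one must track simultaneously the orders of $f$, of $\cV_{2^\bs}$, and of the products $\prod_i\cK_{n_i}$, together with the multiplicative structure of $\Gamma(N,d)$, so that (\ref{ex}) is genuinely applicable to each polynomial that appears. I expect an off-by-a-constant-factor slip there to be the only real hazard.
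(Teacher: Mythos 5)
Your proof is correct and follows exactly the route the paper indicates (the paper omits the proof, saying Theorem \ref{CT1} is derived from Definition \ref{CD1} just as Theorem \ref{FT1} is derived from Lemma \ref{AL1} in \cite{VT190}): reduce both parts to the exactness condition (\ref{ex}) via the reproducing property of $\cV_{2^\bs}$ and the bound $|\cV_j|\le 2\cK_{2j}+\cK_j$, with the frequency bookkeeping showing all polynomials involved lie in $\Tr(N,d)$. The constants $3^{-d}$ and $3^d$ come out exactly as in your accounting, so there is nothing to add.
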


\begin{Remark}\label{CR1} Lemma \ref{CL1} implies that for any $N\in\N$ there exist $\bh$ 
and $m\le C(d)N(\log N)^{d-1}$ with some positive $C(d)$ such that statements (I) and (II) of Theorem \ref{CT1} hold.
\end{Remark}

 We now define the recovery algorithm based on the Korobov point set $\cR_m(\bh)$ such that $P_m(\cdot,\bh)$ is exact on $\Tr(N,d)$. First, we define for $f\in \cC(\T^d)$
$$
V_\bs(f) := V_\bs(\cR_m(\bh))(f) := \frac{1}{m}\sum_{\nu=1}^{m}  f(\bw^\nu)\cV_{2^{\bs}}(\bx-\bw^\nu).
$$
 It is a simple linear operator of discrete convolution. Second, we define
 $$
 \bs^o(f) := \text{arg}\min_{ \bs: \|\bs\|_1=\ell}\|f-V_\bs(f)\|_\infty,
 $$
 where $\ell$ is from Theorem \ref{CT1},
  \be\label{C1}
 V^\ell(f):= V_{\bs^o(f)}(f).
\ee
 
 The following Proposition \ref{CP1} is an analog of Proposition \ref{FP1}. We do not present its proof 
 here, which goes along the line of the proof of Proposition \ref{FP1}.
 
 \begin{Proposition}\label{CP1}  For the collection $H(\ell,d)$ and for   any  function $ f \in \cC(\T^2)$ we have
 \be\label{C5}
  \|f-V^\ell(f)\|_\infty \le  (3^d+1) \min_{\bs: \|\bs\|_1=\ell} d(f,\Tr(R(\bs)))_\infty.
 \ee
 \end{Proposition}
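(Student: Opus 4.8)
The plan is to mimic exactly the proof of Proposition \ref{FP1}, replacing the Fibonacci quantitative constant $9$ by the Korobov constant $3^d$ supplied by Theorem \ref{CT1}. Fix $\bs$ with $\|\bs\|_1 = \ell$, where $\ell$ is as in Theorem \ref{CT1}, so that both properties (I) and (II) of that theorem are available for this $\bs$.

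First I would use property (I) of Theorem \ref{CT1} to observe that the linear operator $V_\bs(\cdot) = V_\bs(\cR_m(\bh))(\cdot)$ reproduces every trigonometric polynomial in $\Tr(R(\bs))$: for any $t \in \Tr(R(\bs))$,
$$
V_\bs(t) = \frac{1}{m}\sum_{\nu=1}^m t(\bw^\nu)\cV_{2^\bs}(\bx - \bw^\nu) = (2\pi)^{-d}\int_{\T^d} t(\by)\cV_{2^\bs}(\bx-\by)\,d\by = t.
$$
Next I would bound the operator norm of $V_\bs$ on $\cC(\T^d)$ using property (II): applying the inequality in (II) with $a_\nu = 1$ for all $\nu$ gives
$$
\left\|\frac{1}{m}\sum_{\nu=1}^m |\cV_{2^\bs}(\bx-\bw^\nu)|\right\|_\infty \le 3^d,
$$
and hence, for any $g \in \cC(\T^d)$, by the triangle inequality (pulling absolute values inside the sum),
$$
\|V_\bs(g)\|_\infty \le \frac{1}{m}\sum_{\nu=1}^m |g(\bw^\nu)|\,|\cV_{2^\bs}(\bx-\bw^\nu)| \le \|g\|_\infty \left\|\frac{1}{m}\sum_{\nu=1}^m |\cV_{2^\bs}(\bx-\bw^\nu)|\right\|_\infty \le 3^d\|g\|_\infty.
$$

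Then I would combine reproduction and boundedness in the standard Lebesgue-type way: for any $t \in \Tr(R(\bs))$,
$$
\|f - V_\bs(f)\|_\infty = \|f - t - V_\bs(f - t)\|_\infty \le \|f-t\|_\infty + \|V_\bs(f-t)\|_\infty \le (3^d+1)\|f-t\|_\infty,
$$
and taking the infimum over $t \in \Tr(R(\bs))$ yields $\|f - V_\bs(f)\|_\infty \le (3^d+1)\,d(f,\Tr(R(\bs)))_\infty$. Finally, since $\bs^o(f)$ is chosen to minimize $\|f - V_\bs(f)\|_\infty$ over all $\bs$ with $\|\bs\|_1 = \ell$, we get
$$
\|f - V^\ell(f)\|_\infty = \|f - V_{\bs^o(f)}(f)\|_\infty \le \min_{\bs:\|\bs\|_1=\ell}\|f - V_\bs(f)\|_\infty \le (3^d+1)\min_{\bs:\|\bs\|_1=\ell} d(f,\Tr(R(\bs)))_\infty,
$$
which is (\ref{C5}). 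There is no real obstacle here — the entire content has been front-loaded into Theorem \ref{CT1}, whose constants $1$ (exactness, giving reproduction) and $3^d$ (the weighted $L_\infty$ bound) are precisely what the argument consumes; the only thing to be slightly careful about is that the statement of Proposition \ref{CP1} writes $\cC(\T^2)$, which should read $\cC(\T^d)$, since we are in the $d\ge 3$ setting.
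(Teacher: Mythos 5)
Your proof is correct and is exactly the argument the paper intends: the paper omits the proof of Proposition \ref{CP1} precisely because it "goes along the line of the proof of Proposition \ref{FP1}", and your write-up is that proof with the constant $9$ from Theorem \ref{FT1} replaced by $3^d$ from Theorem \ref{CT1}. Your remark that $\cC(\T^2)$ in the statement should read $\cC(\T^d)$ is also a correct catch of a typo.
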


 Proposition \ref{CP1} and bounds (\ref{ac2}), (\ref{ac3}) imply the following analog of Proposition \ref{acP2}. 
 
 \begin{Proposition}\label{CP2} Let $n,d \in \N$.   Then for   any  function $ f \in \C(\T^d)$, which belongs to either $W^\br_{\infty,\alpha}$ or $H^\br_\infty$, we have
 \be\label{C6}
  \|f-V^\ell(f)\|_\infty \le  C(\br,d)N^{-g(\br)}.
 \ee
 \end{Proposition}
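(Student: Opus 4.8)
The plan is to deduce Proposition \ref{CP2} from Proposition \ref{CP1} together with the approximation estimates (\ref{ac2}) and (\ref{ac3}), in complete analogy with the way Proposition \ref{acP2} follows from Proposition \ref{acP1}. No new idea is needed: the argument is an assembly of the three ingredients plus a short bookkeeping on the relation between the dyadic level $\ell$ and the hyperbolic cross parameter $N$.

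First I would invoke Proposition \ref{CP1}: for every $f\in\cC(\T^d)$ (the statement of Proposition \ref{CP1} should read $\cC(\T^d)$ rather than $\cC(\T^2)$),
\[
\|f-V^\ell(f)\|_\infty \le (3^d+1)\min_{\bs:\|\bs\|_1=\ell} d(f,\Tr(R(\bs)))_\infty,
\]
where $\ell$ is the integer from Theorem \ref{CT1}, i.e.\ the largest one satisfying $2^\ell\le 3^{-d}N$.

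Second, I would pass from the class-level estimates to the pointwise-in-$f$ estimate. If $f\in W^\br_{\infty,\alpha}$, then $f$ lies in the Sobolev class with $B=1$, so (\ref{ac2}) with $q=\infty$ gives
\[
\min_{\bs:\|\bs\|_1=\ell} d(f,\Tr(R(\bs)))_\infty \le \min_{\bs:\|\bs\|_1=\ell}\ \sup_{g\in W^\br_{\infty,\alpha}} d(g,\Tr(R(\bs)))_\infty \le C(\br,d)\,2^{-g(\br)\ell};
\]
if instead $f\in H^\br_\infty$ the same bound follows from (\ref{ac3}). Substituting into the previous display yields $\|f-V^\ell(f)\|_\infty \le (3^d+1)C(\br,d)\,2^{-g(\br)\ell}$. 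Finally, by maximality of $\ell$ we have $2^{\ell+1}>3^{-d}N$, hence $2^{-\ell}<2\cdot 3^d/N$, and since $g(\br)>0$ this gives $2^{-g(\br)\ell}<(2\cdot 3^d)^{g(\br)}N^{-g(\br)}$; absorbing $(3^d+1)(2\cdot 3^d)^{g(\br)}$ into the constant $C(\br,d)$ gives (\ref{C6}).

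There is no genuine obstacle in this argument; the only point requiring a line of care is the last step, converting the rate $2^{-g(\br)\ell}$ measured in the smoothness scale into the stated rate $N^{-g(\br)}$ attached to the exact cubature parameter $N$. One should also keep in mind, via Remark \ref{CR1}, that $m$ may be taken of order $N(\log N)^{d-1}$, which is why (as noted in the discussion preceding Remark \ref{FRK}) the bound here is slightly weaker than the $d=2$ result in Proposition \ref{FP2}, where $m=b_n$ and the rate is $b_n^{-g(\br)}$.
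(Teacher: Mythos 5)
Your proposal is correct and follows exactly the route the paper intends: the paper simply states that Proposition \ref{CP1} together with the bounds (\ref{ac2}) and (\ref{ac3}) imply Proposition \ref{CP2}, and your write-up supplies precisely those steps, including the (routine but worth stating) conversion from $2^{-g(\br)\ell}$ to $N^{-g(\br)}$ via the maximality of $\ell$ in $2^\ell\le 3^{-d}N$. Nothing is missing and no different idea is used.
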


  \begin{Remark}\label{FRK} By Remark \ref{CR1} we can choose the linear recovery operator $V^\ell$ in such a way that it only uses function values at $m\le C(d)N(\log N)^{d-1}$ points. It is known (see \cite{VTbookMA}, p.125) that for each individual class 
$W^\br_{\infty,\alpha}$ or $H^\br_\infty$ the error of linear recovery with $m$ function values 
cannot be better (in the sense of order) than $m^{-g(\br)}$. Thus, Proposition \ref{CP2} shows
that the operator $V^\ell$ provides suboptimal   in the sense of order (up to the $(\log N)^c$ factors) recovery for each class 
$W^\br_{\infty,\alpha}$ or $H^\br_\infty$.
\end{Remark}

 {\bf Acknowledgement.}
The author would like to thank E. Kosov for  careful reading of the paper and for helpful  suggestions and comments.

  \Addresses

\end{document}